\documentclass[reqno]{amsart}
\usepackage[utf8]{inputenc}
\usepackage{alphabeta}

\usepackage{personal-commands} 
\usepackage{amsthm}
\usepackage{amssymb}
\usepackage{mathrsfs}
\usepackage{cancel}
\usepackage{hyperref}

\usepackage[left=3.5cm, right=3.5cm]{geometry}
\calclayout 
\usepackage{mathtools}
\usepackage{adjustbox}

\newtheorem{theorem}{Theorem}[section]
\newtheorem{lemma}[theorem]{Lemma}
\newtheorem{corollary}[theorem]{Corollary}
\newtheorem{proposition}[theorem]{Proposition}

\theoremstyle{definition}
\newtheorem{remark}[theorem]{Remark}

\newtheorem{problem}[theorem]{Problem}
\newtheorem{question}[theorem]{Question}
\newtheorem{definition}[theorem]{Definition}

\newenvironment{ex}
  {\pushQED{\qed}\examplex}
  {\popQED\endexamplex}

\newtheorem{example}[theorem]{Example}

\usepackage[
			backend=biber, 
			natbib=true,
			bibencoding=utf8,
			style=alphabetic,
			labelalpha=true,
			maxalphanames=5, 
			minalphanames=2,	
]{biblatex}  
\addbibresource{sample.bib} 
   
\AtEveryBibitem{
    \clearfield{urlyear}
    \clearfield{urlmonth}
}

\raggedbottom

\usepackage{xcolor}

\author{Tommaso Faustini \and Alejandro Vargas}

\begin{document}


\title{Almost-valuative invariants of connected split matroids: The \texttt{cd}-index}

\begin{abstract}
    We derive a formula for matroid invariants $\Psi$ on a large family of matroids, 
    provided that $\Psi$ is almost-valuative, 
    namely, it satisfies a hyperplane-cut formula. 
    Our primary application is to the \texttt{cd}-index $\Psi_{cd}$ of the base polytope $\scrP(M)$, 
    a polynomial in two non-commutative variables that compactly encodes the number of face-flags $\calF = \{\sigma_1 \subset \dots \subset \sigma_s \}$ with prescribed dimensions $\dim \sigma_i = d_i$. 
    This generalizes recent work by Ferroni and Schröter on the $f$-vector of $\scrP(M)$,
    yielding a formula that can be understood as a valuative part plus an error term that surprisingly depends only on modular pairs of cyclic flats.
   This enables computations requiring only the following data: 
   the evaluations of $\Psi$ on hypersimplices $\Delta_{k,n}$ 
   and cuspidal matroids $\Lambda^{r,h}_{k,n}$;
   and counts $\lambda(r,h)$ and $\mu(a,b;\alpha,\beta)$ 
   of cyclic flats and modular pairs of cyclic flats in $M$, 
   respectively,
satisfying specific rank and cardinality conditions.
   We compute these for the \texttt{cd}-index, 
   yielding explicit results for sparse paving matroids and rank-2 matroids.
\end{abstract}


\maketitle

\section{Introduction}
    \label{sec:Introduction}

    Matroid invariants are central to recent breakthroughs in combinatorics and combinatorial algebraic geometry, 
    particularly in establishing the real-rootedness, log-concavity, and unimodality of various combinatorial sequences;
    for some introductory surveys see \cite{ard18, kal22, el24}.  
    Many of these invariants $\Psi$ satisfy an inclusion-exclusion property with respect to matroidal subdivisions of the base polytope $\scrP(M)$;
    i.e.~for a hyperplane $H$ that cuts $\scrP(M)$ into two matroid polytopes we have 
    $\Psi(M) = \Psi(\scrP(M) \cap H^+) + \Psi(\scrP(M) \cap H^-) - \Psi(\scrP(M) \cap H)$. 
    This \emph{valuative property} is remarkable; 
    it is rarely evident from the invariants' definitions, 
    which are typically algebraic rather than combinatorial in nature. 

    Exploiting this valuative property, Ferroni and Schröter \cite{fs24c} introduced a framework to efficiently compute $\Psi(M)$ for a broad class of matroids $M$ known as \emph{split matroids}: 
    the value $\Psi(M)$ is determined by the value of $\Psi$ on hypersimplices and so-called \emph{cuspidal matroids}.
    This is significant because split matroids encompass \emph{paving matroids}, 
    and a well-known conjecture of Crapo and Rota, see \cite{mnww11}, posits that paving matroids dominate asymptotically:
    for a fixed rank $k$, the proportion of paving matroids among all matroids on $n$ elements approaches 1 as $n \to \infty$. 

    Split matroids initially emerged in tropical geometry~\cite{js17}.     
    Geometrically, they are obtained from the hypersimplex $\Delta_{k,n}$ by making multiple cuts 
    that are mutually non-interfering, 
    i.e.~no point of $\Delta_{k,n}$ is eliminated twice.
    These cuts correspond to cyclic flats, and any pair of them intersects inside $\Delta_{k,n}$ only for modular pairs,
    i.e.~flats $F,G$ such that $\rk(F) + \rk(G) = \rk(F \cup G) + \rk(F \cap G)$.

    While the Ferroni-Schröter framework 
    provides effective tools to gather evidence or find counterexamples to conjectures
    dealing with valuative invariants, not all important invariants are valuative.
    A natural next step is to study the failure of valuativeness and find new strategies for computation.
    For example, consider the $f$-vector, a fundamental invariant in polyhedral geometry that counts the number of faces in each dimension of a polytope $P$.
    Although the $f$-vector of a matroid polytope fails to be valuative, 
    the main result of~\cite{fs25} demonstrates that its calculation can still be expressed in terms of simplices and cuspidal matroids, with an expression similar to the valuative setting plus an error term. 
    The latter is computationally tractable,
    arising precisely from the double intersections of the aforementioned cuts, 
    thus expressed as a sum over modular pairs of cyclic flats.
    Surprisingly, while triple or higher-order intersections of these cuts are possible, they carry no structural data that contributes to the error term. 
    
    A natural generalization of the $f$-vector is the \emph{flag $f$-vector}, 
    whose entries count the number of face-flags $\calF = \{\sigma_1 \subset \dots \subset \sigma_s \}$ of prescribed dimensions $\dim \sigma_i = d_i$. 
    Flag $f$-vectors satisfy the \emph{generalized Dehn-Sommerville relations} \cite{bb83}.
    These relations imply that the flag $f$-vector can be compactly encoded into a non-commutative polynomial in two variables known as the \texttt{cd}-index \cite{bk91}.
    Our initial aim was to extend the methods of \cite{fs25} to the \texttt{cd}-index,
    to obtain an evaluation that depends only on the \texttt{cd}-indices of hypersimplices and cuspidal matroids.
    It emerged, once we were finished with the calculations, 
    that the key ingredient is a hyperplane-cut formula due to Kim~\cite{kim10}, 
    expressing a property that we call being \emph{almost-valuative}.
    Any almost-valuative invariant $\Psi$ can be calculated with our results. 
    In particular, this \emph{almost-valuative} framework naturally recovers  the results of \cite{fs25} on the $f$-vector, 
    because the $f$-vector satisfies a rather self-evident hyperplane-cut formula.

\subsection*{Results and structure of the paper}
In Section~\ref{sec:CountingChainsOfFaces} we recall the definition of the \texttt{cd}-index and several results aiding its calculation,
including Kim's hyperplane-cut formula.
In essence, this formula says that the correction preventing $\Psi_{cd}(\scrP)$ from being valuative is a function $\calE$ depending entirely on the polytopes of the form $\sigma \cap H$, where $\sigma$ is a face of $\scrP$ intersected by $H$ in its interior.
In Proposition~\ref{prop:PhiForHypersimplex} we derive a recursive formula for the hypersimplex $\Delta_{k,n}$ using a description of faces via deletion and contraction facets, and clever manipulations that depend on the highly symmetric nature of $\Delta_{k,n}$.

In Section~\ref{sec:Matroids} we recall some matroid facts, 
starting with how the matroid base polytope $\scrP(M)$ can be obtained by chopping off  pieces of the hypersimplex $\Delta_{k,n}$, 
and that the cuts $H_F$ are in correspondence with proper cyclic flats $F \in \calZ(M)$. 
We recall from \cite{fs24c} the theory of relaxation of stressed subsets and that a connected matroid $M$ is split if and only if every pair $F\neq G \in \properZ(M)$ is incomparable.
A cuspidal matroid $\Lambda^{r,F}_{k,n}$ is a connected matroid with $\calZ(M) = \aset{\varnothing, F, E}$;
the parameters $r$, $k$ and $n$ denote the rank of $F$, the rank of $\Lambda$, and the cardinality of groundset $E$. 
We characterize the faces of $\Lambda^{r,F}_{k,n}$ in Proposition~\ref{prop:FacesOfCuspidal} and with that give a recursive formula for the \texttt{cd}-index in Proposition~\ref{lm:PhiForCuspidal}.

Section~\ref{sec:TheGeometryOfSplitConnectedMatroids} is dedicated to the geometry of connected split matroids. 
We recall that the cuts $H_F$ and $H_G$ intersect inside $\Delta_{k,n}$ if and only if $F$ and $G$ are a modular pair;
that any vertex of $\Delta_{k,n}$ not present in $\scrP(M)$ is eliminated by exactly one cut $H_F$;
that the face corresponding to a flat $G$ remains unchanged if we relax by any $F \in \properZ(M)$ distinct from $G$;
that for a modular pair $(F,r)$ and $(G,s)$ the polytope $Q = \Delta_{k,n} \cap H_F \cap H_G$ is isomorphic to the product of simplices $\Delta_{r - \gamma, F \setminus G} 
            \times \Delta_{s - \gamma, G \setminus F}$ with $\gamma = \card{F \cap G}$. 
In Proposition~\ref{prop:NoTripleIntersection} we show that in a connected split matroid, triple and higher intersections of cuts $H_F$ avoid the kind of polytopes seen in the correction term of Kim's formula.
This is the key geometric insight to our results.

Section~\ref{sec:FormulasForAlmostValuativeInvariants} begins deriving,
via induction on $\card{\properZ(M)}$, 
a formula for the \texttt{cd}-index $\Psi_{cd}$ of connected split matroids; see Equation~\eqref{eq:CDIndexSplitMatroid}.
The formula has a valuative part expressed in terms of hypersimplices and cuspidal matroids, and an error term governed by modular pairs of cyclic flats; see Equation~\eqref{eq:error}.
Subsection~\ref{sub:ComputationalData} makes several computational observations based on the fact that the value of $\Psi_{cd}$ is the same for all isomorphic matroids, 
and yields in Theorem~\ref{thm:RecursiveCD} a formula in terms of the number $\lambda(r,h)$ of proper non-empty cyclic flats of rank $r$ and size $h$,
and the number $\mu(a,b;\alpha,\beta)$ of modular pairs of cyclic flats $F, G$ satisfying
$a = \card{F \setminus G}$, $b = \card{G \setminus F}$, $\alpha = \rk(F) - \card{F \cap G} $ and $\beta = \rk(G) - \card{F \cap G} $.
In Subsection~\ref{sub:AlmostValuative} we introduce a definition for \emph{almost-valuative} functions $\Psi : \operatorname{MatPoly} \to \mathcal{G}$, 
with $(\mathcal{G}, +, \cdot)$ an arbitrary ring.
Namely, $\Psi$ is almost-valuative if for some function $\calE : \operatorname{MatPoly} \to \mathcal{G}$ and constant $K \in \mathcal{G}$ we have:
for all matroids $M$ and hyperplanes $H$ splitting $\scrP(M)$ into two matroid polytopes
\begin{align*}
\Psi(M) =& \Psi(\scrP(M) \cap H^+) + \Psi(\scrP(M) \cap H^-) - \Psi(\scrP(M) \cap H) \cdot K 
                 \nonumber &- \sum_{\sigma \in \calT_H(M)} \calE(\sigma \cap H).
\end{align*} 
Here $\calT_H(M)$ is the set of faces of $\scrP(M)$ with an interior point contained in $H$.
Our formulas for $\Psi_{cd}$ generalize straightforwardly to almost-valuative invariants $\Psi$, 
yielding our main theorem:
\begin{theorem}  
    \label{thm:AlmostValuativeInvariantsIntro}
    Let $M$ be a connected split matroid such that $\properZ(M) = \aset{(F_i,r_i)}_{i \in [m]}$, 
    and $\Psi$ an almost-valuative invariant with correction~$\calE$.
    We have that
    \[ \Psi(M) = \Psi(\Delta_{k,n}) + \sum_{F \in \properZ(M)} (\Psi(\Lambda^F_M) - \Psi(\Delta_{k,n}) )- \sum_{(F,G) \in \calM \calP(M)} W_{F,G}.\]
Here $\calM \calP(M)$ is the set of modular pairs $(F_i, F_j)$ of cyclic flats with $i < j$, and
\begin{align}
    W_{F,G}=\sum_{\sigma \in \calS(F,G)} \calE(\sigma \cap H).
\end{align}
The sum is over the set $\calS(F,G)$ of faces $\sigma$ of $\calT_{H_G}(\Delta_{k,n})$ such that $\sigma \cap H_G \subset H_F$.

Moreover, we have that $W_{F,G}$ depends only on $a = \card{F \setminus G}$, $b = \card{G \setminus F}$, $\alpha = \rk(F) - \card{F \cap G} $ and $\beta = \rk(G) - \card{F \cap G} $. Also
\begin{align*} 
\Psi(M) =& \Psi(\Delta_{k,n}) 
    + \sum_{0 < r < h < n} \lambda(r,h) \left( \Psi(\Lambda^{r,h}_{k,n}) - \Psi(\Delta_{k,n}) \right) \\
&- \sum_{\mbfr} \mu(\mbfr) \cdot W(\mbfr).
\end{align*}
The sum ranges over all $\mbfr = (a,b; \alpha,\beta)$ 
such that $0 < \alpha < a < n$, $0 < \beta < b < n$ and $(\alpha,a)\le_{\operatorname{lex}} (\beta,b)$ in the lexicographic order.

\end{theorem}
We show that the $f$-vector fits in this framework and recover the result of \cite{fs25} in Subsection~\ref{sub:AlmostValuative}.

Finally, in Section~\ref{sec:ApplicationsAndFutureDirections} we present applications to sparse paving matroids, 
computational code, 
and several questions to outline future directions. 

\subsection*{Notation}
Throughout this article we use 
\begin{itemize} 
    \item $E$ a finite set, $\RR^E = \aset{f : E \to \RR}$  with standard basis $\aset{\mbfe_i}_{i \in E}$ and inner product $\langle -, - \rangle$.
\item $k$ and $n$ the rank and elements of a matroid $M$ on $E$. 
\item For $F \subset E$, we denote $E \setminus F$ by $F^*$,  $h = \card{F}$,  $r = \rk(F)$, $r^* = k - r$,  $h^* = n - h$.
\item  $H^+(u,m) = \aset{x \in \RR^E \suchthat \langle u, x \rangle \ge  m }$ 
    and $ H^-(u,m) =  \aset{x \in \RR^E \suchthat \langle u, x \rangle \le  m }$
    for closed half-spaces,
    and $ H(u,m) =  H^+(u,m) \cap H^-(u,m)   $ for a hyperplane.
  \item $\calT_{H(u,m)}(P)$ for the set of all proper faces $\sigma$ of a polytope $P$ intersecting the interior of both halfspaces  $H^+(u, m)$  and  $H^-(u, m)$.
\item $(F,r) \in \properZ(M)$ for a proper cyclic flat of rank $r$ of a matroid  $M$.
\item $H_F$ for $H(u_F, r)$ and $\calT_F(M)$ for $\calT_{H_F}(\scrP(M))$,
with $\mbfe_F =  \sum_{i \in F} \mbfe_i$ the indicator vector for $F$ and $u_F = \langle \mbfe_F, - \rangle$ the functional that defines the cut associated to $(F,r)$.
\end{itemize}

\section{Counting chains of faces}
    \label{sec:CountingChainsOfFaces}

\subsection{\texorpdfstring{Flag $f$-vectors and the \texttt{cd}-index}{Flag f-vectors and the cd-index}}
    \label{sub:FlagCdIndex}
We assume that the reader is acquainted with some basic polytope theory, 
e.g.~first two chapters of \cite{zie12}.
Given a $d$-dimensional polytope $P \subset \RR^n$ 
the \emph{flag $f$-vector of $P$}  is indexed by the power set $\calP(\aset{0, \dots, d})$;
it has entries $f_S$ 
counting the number of face-flags $\sigma_1 \subset \dots \subset \sigma_s$ 
such that $\aset{\dim \sigma_1, \dots, \dim \sigma_s}$ equals $S$.
For example, $f_\varnothing = 1$ counts the empty chain, 
$f_{\aset{0}}$ counts vertices of $P$, 
$f_{\aset{0,d-1}}$ counts pairs $(v, \sigma)$ of vertices and facets with $v \in \sigma$.
The usual $f$-vector corresponds to the entries indexed by singletons: 
$(f_{\aset{0}}, f_{\aset{1}}, \dots, f_{\aset{d}})$. From now on, we assume a flag always starts with the empty set and finishes with $P$.
Consider the following polynomial in non-commuting variables  $a$ and $b$, called the \texttt{ab}-index:
\begin{align}
\label{eq:AB-index}
\Psi_{ab}(P) = \sum_{\calF \text{ face-flag}} w(\calF),
\end{align}
where the weight of a flag $\varnothing \subset \sigma_1 \subset \dots \subset \sigma_s \subset P$ is $w(\calF) = w_0 \dots w_{d-1}$ with $w_i = b$ if $i \in \aset{\dim \sigma_1, \dots, \dim \sigma_s}$, and $a-b$ otherwise.

Note that $\Psi_{ab}(P)$ is homogeneous of degree $d =\dim P$ for $\deg a = \deg b = 1$.
While the space of non-commutative degree-$d$ polynomials on $a,b$ has dimension $2^d$, 
those appearing as \texttt{ab}-indices are governed by the \emph{generalized Dehn-Sommerville relations} \cite{bb83}.
These are linear relations among the coefficients, characterized by:
\begin{proposition}[Thm~4 of \cite{bk91}]
    \label{prop:CDindexBayer}
Let $\Pi$ be a finite poset.
There exists an integer-coefficient polynomial $\Psi_{cd}(\Pi)$
such that the substitution $c = a + b$ and $d = ab + ba$ yields $\Psi_{ab}(\Pi)$ 
if and only if $\Pi$ satisfies the generalized Dehn-Sommerville relations.
\end{proposition}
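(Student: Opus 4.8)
The plan is to recast the statement as a comparison of two linear subspaces of the space $K_n$ of homogeneous degree-$n$ polynomials in the noncommuting variables $a,b$, where $n=\deg\Psi_{ab}(\Pi)$ (so $\Pi$ is tacitly graded, of rank $n+1$, which is what makes $\Psi_{ab}(\Pi)$ and the relations meaningful). Recall $\dim_{\mathbb{Q}}K_n=2^n$, with the $ab$-words of degree $n$ as a distinguished basis; write $[w]\Psi$ for the coefficient of an $ab$-word $w$ in $\Psi\in K_n$. Let $\iota$ denote the substitution operator $c\mapsto a+b$, $d\mapsto ab+ba$, and put $\mathrm{CD}_n:=\iota(\{cd\text{-polynomials of degree }n\})$. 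Let $V_n\subseteq K_n$ be the subspace that matches, under the standard linear isomorphism $(f_S)_{S\subseteq\{1,\dots,n\}}\leftrightarrow\Psi_{ab}$ between flag $f$-vectors and elements of $K_n$, the solution space of the generalized Dehn-Sommerville relations; thus $\Pi$ satisfies those relations exactly when $\Psi_{ab}(\Pi)\in V_n$. The proposition then reduces to two assertions: (a) $\iota$ is injective, so that a $cd$-polynomial mapping to $\Psi_{ab}(\Pi)$ is unique whenever it exists; and (b) $\mathrm{CD}_n=V_n$.

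For (a) I would order the $ab$-words of each fixed degree lexicographically with $a>b$. This order is multiplicative, so on a homogeneous component the leading word of a product is the product of the leading words. Since $a+b$ has leading word $a$ and $ab+ba$ has leading word $ab$, for a $cd$-word $W$ of degree $n$ the polynomial $\iota(W)$ has leading word $\ell_W:=W|_{c\mapsto a,\,d\mapsto ab}$, occurring with coefficient $1$, and all other monomials of $\iota(W)$ are strictly smaller; reading $W$ and an output word in parallel moreover shows those monomials are pairwise distinct, so $\iota(W)$ has all coefficients in $\{0,1\}$. The rule $W\mapsto\ell_W$ is the classical bijection from $cd$-words of degree $n$ onto the $ab$-words of degree $n$ that contain no factor $bb$ and do not begin with $b$, and both sets are enumerated by the same Fibonacci number $c_n$. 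Distinct leading words force the $\iota(W)$ to be linearly independent, giving injectivity of $\iota$ and $\dim\mathrm{CD}_n=c_n$.

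Claim (b) I would obtain from a dimension count together with one inclusion. On one hand $\dim V_n=c_n$ as well: this is exactly the completeness of the generalized Dehn-Sommerville relations established by Bayer and Billera \cite{bb83}, to the effect that these relations cut $\mathbb{Q}^{2^n}$ down to a subspace of the Fibonacci dimension. On the other hand $\mathrm{CD}_n\subseteq V_n$, and since $\mathrm{CD}_n$ is spanned by the $\iota(W)$ it is enough to check that each $\iota(W)$ satisfies the relations; this is a direct, bookkeeping-heavy verification, which I would run by induction on the length of $W$, using that the family of Dehn-Sommerville relations in consecutive ranks is compatible with prepending a $c$ (rank $m\rightsquigarrow m+1$) or a $d$ (rank $m\rightsquigarrow m+2$) to a $cd$-word. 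Two subspaces of equal dimension with one contained in the other coincide, so $\mathrm{CD}_n=V_n$; hence $\Psi_{ab}(\Pi)\in V_n$ if and only if $\Psi_{ab}(\Pi)\in\mathrm{CD}_n$, i.e.\ if and only if some $cd$-polynomial maps to $\Psi_{ab}(\Pi)$ under $\iota$. (Closer to the original argument of \cite{bk91}, one could instead prove the reverse inclusion $V_n\subseteq\mathrm{CD}_n$ head-on, rewriting a generic GDS-satisfying $ab$-polynomial in $c$ and $d$ using the explicit relations.)

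It remains to check integrality. If $\Pi$ satisfies the relations, write $\Psi_{ab}(\Pi)=\sum_i\gamma_i\,\iota(W_i)$ over the $cd$-words $W_1,\dots,W_{c_n}$ of degree $n$, ordered so that $\ell_{W_1}>\ell_{W_2}>\cdots$. A descending induction recovers the $\gamma_i$ as integers: since $[\ell_{W_i}]\iota(W_i)=1$ while $[\ell_{W_i}]\iota(W_j)=0$ for $j>i$ (every monomial of $\iota(W_j)$ is $\le\ell_{W_j}<\ell_{W_i}$), comparing coefficients of $\ell_{W_i}$ gives $\gamma_i=[\ell_{W_i}]\Psi_{ab}(\Pi)-\sum_{j<i}\gamma_j\,[\ell_{W_i}]\iota(W_j)$, which is an integer once $\gamma_1,\dots,\gamma_{i-1}$ are, because $\Psi_{ab}(\Pi)$ has integer coefficients and every $\iota(W_j)$ has $\{0,1\}$ coefficients. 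Thus $\Psi_{cd}(\Pi):=\sum_i\gamma_i\,W_i$ works. I expect the main obstacle to be the input $\dim V_n=c_n$: the rest is linear algebra and a mechanical substitution, but without the Bayer-Billera completeness theorem one recovers only the inclusion $\mathrm{CD}_n\subseteq V_n$ --- equivalently the implication "a $cd$-index exists $\Rightarrow$ $\Pi$ satisfies Dehn-Sommerville" --- and not the converse.
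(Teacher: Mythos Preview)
The paper does not prove this proposition; it is stated as Theorem~4 of \cite{bk91} and quoted without proof, so there is no in-paper argument to compare against. Your sketch is essentially the standard Bayer--Klapper proof: identify the image $\mathrm{CD}_n$ of the substitution $\iota$, show it has Fibonacci dimension via the leading-term bijection $W\mapsto \ell_W$, invoke the Bayer--Billera result that the Dehn--Sommerville solution space $V_n$ also has Fibonacci dimension, and check one inclusion. The integrality argument by unitriangularity is also standard and correct.

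One remark: the step you flag as ``bookkeeping-heavy'' --- verifying directly that each $\iota(W)$ satisfies the generalized Dehn--Sommerville relations --- is actually the part that, in the literature, is usually \emph{not} done this way. The original \cite{bk91} argument instead shows $V_n\subseteq\mathrm{CD}_n$ by an explicit rewriting (your parenthetical alternative), or equivalently appeals to coalgebra structure. Your proposed induction on the length of $W$, prepending $c$ or $d$, is workable but you would need to make precise how the GDS relations for rank $n$ restrict to those for rank $n-1$ and $n-2$ under deletion of the top element(s); this compatibility is true but deserves a line. Either route closes the argument; you have correctly identified that the Bayer--Billera dimension count $\dim V_n=c_n$ is the only genuinely nontrivial external input.
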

All finite Eulerian posets satisfy Proposition~\ref{prop:CDindexBayer},
where, instead of face-flags, chains are used to define $\Psi_{ab}$.
The face-poset of a polytope satisfies the Eulerian condition.
Note that the grading $\deg c = 1$ and $\deg d = 2$ yields an homogeneous polynomial $\Psi_{cd}$,
and that the standard basis for the degree-$d$ part has Fibonacci-many terms.

\subsection{Cuts and products}
\label{sub:CutsAndProducts}
There are several techniques to compute the \texttt{cd}-\emph{index} $\Psi_{cd}$ of a polytope, see \cite{bay21} for a survey.
We use Kim's formula for hyperplane splits~\cite{kim10}.
For a polytope $P \subset \RR^n$ and 
a hyperplane 
$H(u, m) = \aset{x \in \RR^n \suchthat \langle u, x \rangle = m}\subset \RR^n$ 
intersecting the interior of $P$, we have: 
\begin{align}
    \label{eq:KimsFormula}
\Psi_{cd}(P) =& \Psi_{cd}(P^+) + \Psi_{cd}(P^-) - \Psi_{cd}(P \cap H) \cdot c 
\\ \nonumber &- \sum_{\sigma \in \calT_{H(u,m)}(P)} \Psi_{cd}(\sigma \cap H) \cdot d \cdot \Psi_{cd}(P\cap H / (\sigma \cap H)).
\end{align} 
Here $\calT_{H(u,m)}(P)$ denotes the set of all proper faces $\sigma$ of $P$ intersecting the interior of both halfspaces  $H^+(u, m)$  and  $H^-(u, m)$ 
non-trivially (we chose $\calT$ because in Greek cut \emph{τομή}). 

In our investigations, the faces $\sigma$ appearing in Equation~\eqref{eq:KimsFormula} are products of polytopes.
Therefore, we use Ehrenborg, Fox and Readdy's formula for products of two polytopes $V$ and $W$ \cite[Prop. 6.2 and Thm. 7.1]{ef03}:
\begin{align} 
    \label{eq:ProductFormula}
  \Psi_{cd}(V \times W) = N(\Psi_{cd}(V), \Psi_{cd}(W)),
\end{align}
where $N$ is a bilinear operator satisfying a recursive formula. Thanks to this formula, in this exposition, we will focus on the case of connected matroids.

\subsection{A mixed expression}
\label{sub:AMixedExpression}
Grouping together chains by their last element gives us a mixed expression in variables $a, b, c, d$. 
We say that two polynomials $f$ and $g$ in these variables are equivalent, denoted $f \sim g$, if the substitution $c = a + b$,  $d = ab + ba$ yields two equal polynomials.

\begin{lemma} 
    \label{lm:CDIndexStratified}
    Let $P$ be a polytope, $\calF(P)$ its face lattice, and $\vertici P$ its vertices.
    The \texttt{cd}-index $\Psi_{cd}(P)$ is equivalent to
    \begin{align} 
        \label{eq:StratifyCDIndex}
        \Phi(P) = (a-b)^{\dim P} + \card{\vertici P} b(a-b)^{\dim P - 1} +  \sum_{\sigma \in \calF_{\geq 1}(P)} \Psi_{cd}(\sigma) b (a-b)^{\codim \sigma -1}.
    \end{align}
    Here $\codim \sigma$ is relative to $\affspan P$.
\end{lemma}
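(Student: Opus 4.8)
The plan is to unfold the definition of $\Psi_{ab}(P)$ in Equation~\eqref{eq:AB-index} by grouping the face-flags according to their last proper member $\sigma$ (where a flag is $\varnothing \subset \sigma_1 \subset \dots \subset \sigma_s \subset P$). There are three cases to separate out. First, the empty flag $\varnothing \subset P$, which contributes the single monomial with $w_i = a - b$ for all $i = 0, \dots, \dim P - 1$, giving the term $(a-b)^{\dim P}$. Second, flags whose largest proper face is a vertex $v$: since $\dim v = 0$, the weight letter $w_0$ is forced to be $b$ and the remaining letters $w_1, \dots, w_{\dim P - 1}$ are $a-b$; summing over the $\card\vertici P$ vertices gives $\card\vertici P \, b (a-b)^{\dim P - 1}$. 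Third, flags whose largest proper member is a face $\sigma$ with $\dim\sigma \geq 1$.

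For the third case I would factor the weight $w(\calF) = w_0 \cdots w_{\dim P - 1}$ at the position $\dim\sigma$. The initial segment $w_0 \cdots w_{\dim\sigma - 1}$ ranges, as the sub-flag $\varnothing \subset \sigma_1 \subset \dots \subset \sigma$ varies over all face-flags of $\sigma$ ending at $\sigma$, over exactly the monomials of $\Psi_{ab}(\sigma)$ — this is just the definition of the $\mathtt{ab}$-index of $\sigma$ applied to the polytope $\sigma$ of dimension $\dim\sigma$. The letter $w_{\dim\sigma}$ is $b$, because $\dim\sigma$ is in the dimension set of the flag. The final segment $w_{\dim\sigma + 1} \cdots w_{\dim P - 1}$ has every letter equal to $a - b$, since no face of the flag strictly between $\sigma$ and $P$ exists; its length is $\dim P - 1 - \dim\sigma = \codim\sigma - 1$, where codimension is taken inside $\affspan P$. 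Collecting, the flags with top proper face $\sigma$ contribute $\Psi_{ab}(\sigma)\, b\, (a-b)^{\codim\sigma - 1}$. Summing over all $\sigma \in \calF_{\geq 1}(P)$ and adding the first two cases yields an expression equal to $\Psi_{ab}(P)$ on the nose.

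Finally I would pass from $\mathtt{ab}$ to $\mathtt{cd}$: by Proposition~\ref{prop:CDindexBayer} each face $\sigma$ of a polytope has a $\mathtt{cd}$-index, so $\Psi_{ab}(\sigma) \sim \Psi_{cd}(\sigma)$, and replacing $\Psi_{ab}(\sigma)$ by $\Psi_{cd}(\sigma)$ throughout changes $\Phi(P)$ only up to the equivalence $\sim$. Since $\Psi_{ab}(P) \sim \Psi_{cd}(P)$ as well, this gives $\Phi(P) \sim \Psi_{cd}(P)$, as claimed.

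The only subtlety, and the step I would be most careful about, is the bookkeeping of indices: making sure that "largest proper member of the flag is $\sigma$" correctly forces letter $b$ at position $\dim\sigma$ and letters $a-b$ at all positions above it, and that the exponent works out to $\codim\sigma - 1$ rather than $\codim\sigma$ or $\dim P - \dim\sigma$. It is also worth noting explicitly that the vertex case is really the special instance $\dim\sigma = 0$ of the general formula, with the convention $\Psi_{cd}(v) = 1$, so one could even merge the second and third sums; I would mention this but keep them separated to match the statement. Everything else is a direct expansion, with no genuine obstacle.
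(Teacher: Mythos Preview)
Your proposal is correct and follows precisely the approach the paper signals in the sentence introducing Subsection~\ref{sub:AMixedExpression}: ``Grouping together chains by their last element gives us a mixed expression in variables $a, b, c, d$.'' The paper states Lemma~\ref{lm:CDIndexStratified} without writing out a proof, so your argument---partitioning flags by their top proper face $\sigma$, factoring the weight as $\Psi_{ab}(\sigma)\cdot b\cdot (a-b)^{\codim\sigma-1}$, and then passing to $\Psi_{cd}$ via Proposition~\ref{prop:CDindexBayer}---is exactly the intended fill-in, with the index bookkeeping done correctly.
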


It is convenient to set apart the expressions corresponding to the empty chain and vertices; see Lemma~\ref{lm:FacesOfDeltakn}. 
Therefore, we set
\begin{align} 
    \label{eq:EmptyVertices}
    \EmVe(P) :=  (a-b)^{\dim P} + \card{\vertici P} b(a-b)^{\dim P - 1}
\end{align}

As our variables are non-commuting, 
we seek algebraic expressions to manipulate the right hand side of Equation~\eqref{eq:StratifyCDIndex} into an expression only in $c$ and $d$.

\begin{lemma}
    \label{lm:AlgebraForABCD}
Let $[-,-]$ denote the commutator.
The following relations between the four non-commutative variables $a$, $b$, $c=a+b$ and $d=ab+ba$ hold:
\begin{align*} 
    (a-b)^2 &=c^2-2d &
    b(a-b) &=d-cb\\
    a-b &=c-2b &
    \left[ b,(a-b)^2\right] &=\left[d,c\right].
\end{align*}
\end{lemma}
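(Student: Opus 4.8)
The plan is to verify each of the four identities directly by expanding in the non-commutative polynomial ring $\mathbb{Z}\langle a,b\rangle$, using only $c=a+b$ and $d=ab+ba$; no clever trick is needed, since each identity is a degree $\le 2$ statement. First I would record the two degree-one observations: $a-b=(a+b)-2b=c-2b$, which is immediate, and (less symmetric-looking) that this will feed into the degree-two computations. Then for $(a-b)^2$ I expand $(a-b)(a-b)=a^2-ab-ba+b^2=(a^2+ab+ba+b^2)-2(ab+ba)=(a+b)^2-2d=c^2-2d$, using that $c^2=a^2+ab+ba+b^2$. Similarly $b(a-b)=ba-b^2$, and since $d-cb=(ab+ba)-(a+b)b=ab+ba-ab-b^2=ba-b^2$, the identity $b(a-b)=d-cb$ follows.

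For the final commutator identity I would compute both sides. On one hand $[b,(a-b)^2]=b(a-b)^2-(a-b)^2 b$; using the already-established $(a-b)^2=c^2-2d$ and the fact that scalars aside the only thing that matters is $[b,c^2]-2[b,d]$, but it is cleaner to substitute $(a-b)^2=(a-b)(a-b)$ and use $b(a-b)=d-cb$ twice: $b(a-b)(a-b)=(d-cb)(a-b)=d(a-b)-cb(a-b)=d(a-b)-c(d-cb)$, and similarly $(a-b)(a-b)b=(a-b)(a-b)b$; expanding $(a-b)b=ab-b^2$ and iterating, or more efficiently writing $(a-b)^2 b = (c^2-2d)b$ and $b(a-b)^2=b(c^2-2d)$, one gets $[b,(a-b)^2]=[b,c^2]-2[b,d]$. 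On the other hand $[d,c]=dc-cd$. The remaining task is the purely algebraic check that $[b,c^2]-2[b,d]=[d,c]$, equivalently $bc^2-c^2b-2bd+2db=dc-cd$; expanding everything into monomials in $a,b$ (there are only the six monomials of degree $3$ that occur, namely $a^2b, aba, ba^2$ and their "$b$-heavy" partners) and collecting shows both sides equal the same combination.

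The main obstacle, such as it is, is purely bookkeeping in the last identity: one must expand degree-three non-commutative monomials carefully, since $ab\neq ba$ makes cancellation order-sensitive, and it is easy to drop a term. I would organize this by first reducing everything to expressions in $b$ and $c$ (eliminating $a=c-b$ and $d=bc+cb-2b^2$, which one checks from $d=ab+ba=(c-b)b+b(c-b)=cb-b^2+bc-b^2$), after which $[d,c]=[bc+cb,c]=bc^2+cbc-cbc-c^2b=bc^2-c^2b=[b,c^2]$, and independently $[b,d]=[b,bc+cb-2b^2]=b^2c+bcb-b^2c-cb^2=bcb-cb^2$; then $[b,c^2]-2[b,d]=[b,c^2]-2(bcb-cb^2)$, and one verifies $bc^2-c^2b = 2(bcb-cb^2) + [b,c^2]$ is false as written—so I would instead trust the $b,c$ reduction of the left side, $[b,(a-b)^2]=[b,c^2-2d]=[b,c^2]-2[b,d]=(bc^2-c^2b)-2(bcb-cb^2)$, against the right side $[d,c]=bc^2-c^2b$, which forces the check $bcb=cb^2$; since that last equality is generally false, the correct route is to expand $(a-b)^2$ literally rather than via $c^2-2d$ when taking the commutator with $b$, because the substitution $\sim$ is only an equivalence of $cd$-forms, not an identity of $ab$-polynomials. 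Concretely, $(a-b)^2=a^2-ab-ba+b^2$ as an honest polynomial, so $[b,(a-b)^2]=[b,a^2]-[b,ab]-[b,ba]+[b,b^2]=(ba^2-a^2b)-(bab-ab^2)-(b^2a-bab)=ba^2-a^2b-b^2a+ab^2$; and $[d,c]=[ab+ba,a+b]=(aba-aba)+(ab^2-b ab)+(ba^2-aba)+(bab-ab a)$—wait, this too needs care—so the honest computation $[d,c]=(ab+ba)(a+b)-(a+b)(ab+ba)=a b a+ab^2+ba^2+bab-a^2b-aba-bab-b^2a=ab^2+ba^2-a^2b-b^2a$, which matches $[b,(a-b)^2]$ term for term. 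That settles it.
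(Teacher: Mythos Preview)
Your final direct expansion---computing $[b,(a-b)^2]=ba^2-a^2b+ab^2-b^2a$ and $[d,c]=ab^2+ba^2-a^2b-b^2a$ monomial by monomial in $\mathbb{Z}\langle a,b\rangle$---is correct and is exactly what the paper does.

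However, the long middle paragraph should be cut, because it contains a genuine conceptual error. You assert that ``the substitution $\sim$ is only an equivalence of $cd$-forms, not an identity of $ab$-polynomials'' in order to explain why the route via $(a-b)^2=c^2-2d$ failed. This is false: $(a-b)^2=c^2-2d$ \emph{is} an honest identity in $\mathbb{Z}\langle a,b\rangle$, and you proved it yourself two paragraphs earlier. The reason your $b,c$-reduction went wrong is not the method but two arithmetic slips: when you wrote $[d,c]=[bc+cb,c]$ you silently dropped the $-2b^2$ term from $d=bc+cb-2b^2$, so you are missing a contribution $-2[b^2,c]$; and your computation of $[b,d]$ should give $b^2c-cb^2$, not $bcb-cb^2$. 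With those two fixes the $b,c$-route closes cleanly as well: one finds $[b,d]=b^2c-cb^2=[b^2,c]$, hence
\[
[b,(a-b)^2]=[b,c^2-2d]=[b,c^2]-2[b^2,c],
\]
while on the other side
\[
[d,c]=[bc+cb-2b^2,\,c]=(bc^2-c^2b)-2(b^2c-cb^2)=[b,c^2]-2[b^2,c],
\]
and the two agree. So either route works; just keep the clean direct expansion and discard the detour with its mistaken diagnosis.
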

\begin{proof} 
The first three are a direct substitution. 
For the last:
\begin{align*} 
    \left[ b,(a-b)^2\right] &=  b(a-b)^2 - (a-b)^2b   \\
                            &=  ba^2 -\bcancel{bab}-b^2a+\bcancel{b^3}-a^2b+ab^2+\bcancel{bab}-\bcancel{b^3}\\
    \left[d,c\right] &= dc - cd = (ab+ba)(a+b) - (a+b) (ab+ba) \\
                            &=  \bcancel{aba}+ba^2+ab^2+\bcancel{bab} - a^2b -\bcancel{aba} -\bcancel{bab}-b^2a. \qedhere 
\end{align*}
\end{proof}

\begin{corollary} 
    \label{cor:}
  For all $m\in \NN$ we have that:
\begin{align*}
(a-b)^{2m}&\sim (c^2-2d)^{m}  , \\
(a-b)^{2m+1}&\sim (c^2-2d)^{m}(c-2b) , \\
b(a-b)^{2m} &\sim g_{cd}(2m) \\
b(a-b)^{2m+1} &\sim g_{cd}(2m+1). 
\end{align*} 
Here $g_{cd}$ is defined by the following recursion: 
{
\footnotesize{
\[
g_{cd}(t) = 
\begin{cases}
(c^2-2d)g_{cd}(t-2) + (dc-cd)(c^2-2d)^{\frac{t-2}{2}} & \text{if } t\geq2 \text{ even}, \\
(c^2-2d)g_{cd}(t-2) + (dc-cd)(c^2-2d)^{\frac{t-3}{2}}(c-2b)  & \text{if } t\geq2 \text{ odd}, \\
d - cb & \text{if } t=1, \\
b & \text{if } t=0.
\end{cases}
\]}
}
\end{corollary}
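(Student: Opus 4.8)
<br>

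The plan is to prove all four equivalences by induction on $m$, using the algebraic relations collected in Lemma~\ref{lm:AlgebraForABCD} to reduce every instance to a smaller one. The two ``even/odd power of $(a-b)$'' statements are the easy base cases: $(a-b)^{2m} = \big((a-b)^2\big)^m$, and by the first relation $(a-b)^2 = c^2 - 2d$ is already a $cd$-expression, so $(a-b)^{2m} \sim (c^2-2d)^m$ outright (no equivalence needed, it is an equality once we substitute); multiplying by $a-b = c-2b$ via the third relation gives the odd case. I would state these first and then treat them as known.

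For the two statements about $b(a-b)^t$, I would set $g_{cd}(t)$ to be the right-hand side defined by the displayed recursion and prove $b(a-b)^t \sim g_{cd}(t)$ by induction on $t$, with base cases $t=0$ ($b(a-b)^0 = b$, matching) and $t=1$ ($b(a-b) = d - cb$ by the second relation of Lemma~\ref{lm:AlgebraForABCD}, matching). For the inductive step with $t \geq 2$, write
\begin{align*}
b(a-b)^t = \big(b(a-b)^{t-2}\big)(a-b)^2 + \big[(a-b)^2,\, b(a-b)^{t-2}\big]\cdot 0 + \cdots
\end{align*}
— more precisely, the clean way is $b(a-b)^2 = (a-b)^2 b + [b,(a-b)^2]$, and the last relation of Lemma~\ref{lm:AlgebraForABCD} says $[b,(a-b)^2] = [d,c] = dc-cd$. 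So
\begin{align*}
b(a-b)^t &= b(a-b)^2 (a-b)^{t-2} = (a-b)^2 b (a-b)^{t-2} + (dc-cd)(a-b)^{t-2}\\
&= (c^2-2d)\, b(a-b)^{t-2} + (dc-cd)(a-b)^{t-2},
\end{align*}
using $(a-b)^2 = c^2-2d$ in the last step. Now apply the induction hypothesis to $b(a-b)^{t-2}$, and apply the already-established even/odd power formulas to $(a-b)^{t-2}$: if $t$ is even then $(a-b)^{t-2} \sim (c^2-2d)^{(t-2)/2}$, and if $t$ is odd then $(a-b)^{t-2} \sim (c^2-2d)^{(t-3)/2}(c-2b)$. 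This reproduces exactly the two non-trivial branches of the recursion for $g_{cd}$, closing the induction.

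The main subtlety — really the only place one must be careful — is that $\sim$ is only an equivalence, not equality, so one must check it behaves well under the manipulations used: it is a congruence for left and right multiplication by $cd$-polynomials (immediate, since the substitution $c=a+b$, $d=ab+ba$ is an algebra homomorphism), which is what legitimizes replacing $b(a-b)^{t-2}$ by $g_{cd}(t-2)$ and $(a-b)^{t-2}$ by its $cd$-form inside the larger expression. One should also note that the recursion's ``odd'' branch still contains a $b$ (inside the factor $c-2b$), so $g_{cd}(t)$ is genuinely a mixed $a,b,c,d$-expression rather than a pure $cd$-polynomial; this is harmless and consistent with how $\Phi(P)$ is used in Lemma~\ref{lm:CDIndexStratified}. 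Finally I would remark that the statement as printed writes $f_{cd}$ in the corollary's display but defines $g_{cd}$ in the recursion; these should be unified to a single symbol, and the proof establishes $b(a-b)^{t} \sim g_{cd}(t)$ for that symbol.
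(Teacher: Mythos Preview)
Your proposal is correct and follows exactly the route the paper intends: the paper's own proof is the single line ``This is a direct consequence of Lemma~\ref{lm:AlgebraForABCD},'' and what you have written is precisely the unpacking of that sentence via the commutator identity $[b,(a-b)^2]=[d,c]$ together with $(a-b)^2=c^2-2d$ and $a-b=c-2b$. Your observation about the $f_{cd}/g_{cd}$ mismatch is also a genuine typo in the statement; and note that since the relations in Lemma~\ref{lm:AlgebraForABCD} are actual equalities in $\mathbb{Z}\langle a,b\rangle$ (with $c=a+b$, $d=ab+ba$), every step you wrote is in fact an equality, so the caution about $\sim$ being a congruence, while correct, is not strictly needed here.
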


  \begin{proof}
    This is a direct consequence of Lemma \ref{lm:AlgebraForABCD}.
\end{proof}

The upshot is that we apply the four identities from Lemma~\ref{lm:AlgebraForABCD} to Equation~\eqref{eq:StratifyCDIndex} 
to eliminate the variable $a$ and get an expression of the form $p(c,d) + q(c,d) b$.
See Example~\ref{ex:U25} on Page~\pageref{ex:U25} for a concrete example, 
which also motivates us to observe: 

\begin{lemma} 
    \label{lm:vanishingb}
    Let $a,b,c,d$ be non-commutative variables, and $p,q,h$ polynomials in $c,d$.
    If
    \[ 
        p(c,d) + q(c,d)b = h(c,d), 
    \]
    then $q$ is identically zero.
\end{lemma}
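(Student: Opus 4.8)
The plan is to interpret the stated identity in $\mathbb{Z}\langle a,b\rangle$, the free associative $\mathbb{Z}$-algebra on $a,b$, and exploit the fact that $c = a+b$ and $d = ab+ba$ generate a free subalgebra. Concretely, I would first recall (or prove by a short degree/dimension argument) that the monomials in $c$ and $d$ are $\mathbb{Z}$-linearly independent in $\mathbb{Z}\langle a,b\rangle$: in degree $n$ there are exactly Fibonacci-many $cd$-monomials, and their images are linearly independent because, for instance, each $cd$-monomial has a leading term (under the lexicographic order with $a > b$) of the form $a^{i_1}b a^{i_2} b \cdots$ with these leading terms pairwise distinct. This gives an injection $\mathbb{Z}\langle c,d\rangle \hookrightarrow \mathbb{Z}\langle a,b\rangle$, so $\mathbb{Z}[c,d]$-expressions can be compared coefficientwise after expansion in $a,b$.

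Next I would rewrite everything in terms of $a$ and $b$. Substituting $c = a+b$, $d = ab+ba$, the hypothesis becomes $P(a,b) + Q(a,b)\,b = H(a,b)$ in $\mathbb{Z}\langle a,b\rangle$, where $P, Q, H$ are the expansions of $p, q, h$. The key structural observation is about \emph{trailing letters}: every monomial in the expansion of a polynomial in $c$ and $d$ that is a product of $c$'s and $d$'s ends in either $a$ or $b$, but crucially, for a fixed $cd$-word the set of $ab$-monomials it produces is closed under a controlled operation. The cleanest route is to use the algebra anti-automorphism $\iota$ of $\mathbb{Z}\langle a,b\rangle$ that reverses words (sends $x_1\cdots x_k \mapsto x_k \cdots x_1$). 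Since $\iota(c) = c$ and $\iota(d) = d$, the subalgebra $\mathbb{Z}[c,d]$ is $\iota$-stable; hence $\iota(P) = P'$, $\iota(Q)=Q'$, $\iota(H) = H'$ are again (the $ab$-expansions of) polynomials in $c,d$. Applying $\iota$ to the hypothesis gives $P' + b\,Q' = H'$, an identity of the same shape but with the stray $b$ on the \emph{left}.

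Now I would combine the two identities. Multiply the original $P + Qb = H$ on the left by $b$... actually the slicker finish: from the two displayed identities, $P + Qb$ and (after applying $\iota$ and relabelling) $\tilde P + b\tilde Q$ both lie in $\mathbb{Z}[c,d]$; consider instead the single equation $Qb = H - P \in \mathbb{Z}[c,d]$. So it suffices to show: if $Q(c,d)\cdot b$ lies in the subalgebra generated by $c$ and $d$, then $Q = 0$. Expand $Q b$ in $ab$-monomials and look at the coefficient structure modulo the two-sided ideal generated by $a$ (equivalently, set $a = 0$): in $\mathbb{Z}\langle a,b\rangle / (a) \cong \mathbb{Z}[b]$ (polynomial, since $b$ is now the only generator), $c \mapsto b$ and $d \mapsto b^2$, so every element of $\mathbb{Z}[c,d]$ maps to an element of $\mathbb{Z}[b^{\,}]$ whose lowest-degree term matches a $cd$-monomial — in particular, tracking the top $b$-power one sees $Q b$ would need leading $b$-behavior incompatible with $Q \neq 0$ unless $Q$'s image is zero; iterating this filtration argument (peeling off one power of $b$ at a time, or doing induction on $\deg Q$) forces every coefficient of $Q$ to vanish.

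The main obstacle is the linear-independence / structure lemma for $cd$-monomials inside $\mathbb{Z}\langle a,b\rangle$ and pinning down exactly which invariant (trailing letter, the $\iota$-reversal, or reduction mod $(a)$) most cleanly detects the parasitic $b$. I expect the reversal anti-automorphism $\iota$ to be the decisive trick, since it instantly reduces "stray $b$ on the right" to "stray $b$ on the left", and the two together with $Qb \in \mathbb{Z}[c,d]$ trap $Q$; the remaining work is the routine verification that $\mathbb{Z}[c,d]$ is a free subalgebra (so "coefficients of $Q$" are well-defined) and a short induction to conclude $Q \equiv 0$.
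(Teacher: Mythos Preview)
Your proposal never actually closes the argument and contains a concrete error, while the paper's proof is a one-liner using a symmetry you overlooked. The paper applies the \emph{swap} automorphism $\sigma\colon a \leftrightarrow b$, which fixes $c=a+b$ and $d=ab+ba$ and therefore fixes every element of $\mathbb{Z}[c,d]$ \emph{pointwise}. Applying $\sigma$ to $q(c,d)\,b = h(c,d)-p(c,d)$ leaves the right-hand side unchanged and turns the left into $q(c,d)\,a$, so $q(c,d)(a-b)=0$ in the free algebra and hence $q(c,d)=0$.

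Your reversal anti-involution $\iota$ is the wrong symmetry for this purpose: it only stabilises $\mathbb{Z}[c,d]$ as a set (e.g.\ $\iota(cd)=dc$), it does not fix it pointwise, so from $Qb\in\mathbb{Z}[c,d]$ you get only that $b\,\iota(Q)\in\mathbb{Z}[c,d]$, a different element, and you never explain how the two facts ``trap $Q$''. Your reduction modulo $(a)$ contains a genuine mistake: $d=ab+ba$ maps to $0$ under $a\mapsto 0$, not to $b^{2}$; with the corrected image the quotient collapses too much (for instance $cb$ and $c^{2}$ both become $b^{2}$), so it cannot distinguish $q\neq 0$ from $q=0$. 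The linear-independence of $cd$-monomials and the ``peel off one power of $b$'' idea could in principle be turned into an induction, but you do not carry it out, and even done carefully it is far heavier than simply swapping $a$ and $b$.
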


\begin{proof} 
Both $c$ and $d$ are symmetric under the exchange of $a$ and $b$. 
Thus, so is any equation in $c$ and $d$, in particular $h(c,d)-p(c,d) = q(c,d)b$. 
But $q(c,d)b$ is non-symmetric if $q(c,d) \neq 0$. 
The result follows.
\end{proof}

\subsection{\texorpdfstring{The \texttt{cd}-index of the hypersimplex $\Delta_{k,n}$}{The cd-index of the hypersimplex Delta(k,n)}}
\label{sub:TheCDIndexOfAHypersimplex}
For a finite set $E$ with $n$ elements and $k \in \aset{0, \dots, \card{E}}$ 
the hypersimplex $\Delta_{k,E}$ is the convex hull in $\RR^{E}$ 
of the indicator vectors $\mbfe_B = \sum_{i \in B} \mbfe_i$ for all the $k$-subsets $B \subset E$. 
We write $\Delta_{k,n}$ for $\Delta_{k,\aset{1, \dots, n}}$.
For $k=0$ or $k=n$ we get the point $\mathbf 0$ or $\mathbf 1$, respectively, and for $k=1$ or $k=n-1$ the simplex of dimension $n-1$.
In general $\dim \Delta_{k,E} = n - 1 $, 
as $\Delta_{k,E} \subset H(\mbfe_{E}, k)$.
Moreover, $\Delta_{n-k,E}$ is the image of $\Delta_{k,E}$ under the involution $(-)^* : \RR^E \to \RR^E$ given by $\mbfp \mapsto \mbfp^* = -\mbfp + 1$.
Thus, 
\begin{align} 
    \label{eq:CDindexUnderDuality}
    \Psi_{cd}({\Delta_{n-k,E}}) = \Psi_{cd}({\Delta_{k,E}}).
\end{align}
To apply Lemma~\ref{lm:CDIndexStratified} we describe $\calF(\Delta_{k,n})$.
This is a standard fact, see e.g.~\cite[Section 10.3]{jos21} for an exposition.
First, 
facets of the form $\Delta_{k,E} \cap H(\mbfe_i, 0)$ are called \emph{deletion facets}, 
and $\Delta_{k,E} \cap H(\mbfe_i, 1)$ are \emph{contraction facets}.
Intersections of facets determine faces as follows:

\begin{lemma} 
    \label{lm:FacesOfDeltakn}
    Let $n \in \ZZ_{\ge 1}$ and $k \in \aset{0, \dots, n}$.
    There is a bijection
    \begin{align} 
    \label{eq:FacesOfDelta}
     \Gamma :=  \left\{ \begin{array}{c}
     C, D \subset E \text{ s.t. } C \cap D = \varnothing, C \cup D \ne \varnothing\\
      \card{C} < k \text{ and } 
     \card{D} < n - k
  \end{array}\right\}
  \longleftrightarrow
       \left\{ \begin{array}{c}
        \text{ faces $\sigma$ of } \Delta_{k,n} \\
        \text{s.t. } \dim \sigma \ge 1
  \end{array}\right\},
    \end{align}
    where $(C,D) \in \Gamma$ is sent to 
    $\sigma_{C,D} = \Delta_{k,n} \cap 
    \bigcap_{i \in C} H(\mbfe_i, 1) 
    \cap  \bigcap_{j \in D} H(\mbfe_j, 0)$,
    and moreover this $\sigma_{C,D}$ is isomorphic to the hypersimplex 
    $\Delta_{k-\card{C},n- \card{C \cup D}}$.
\end{lemma}

\begin{remark} 
    Note that if $k=1$, there are only deletion facets, and if  $k = n-1$, there are only contraction facets.
\end{remark}

Plugging in Lemma~\ref{lm:FacesOfDeltakn} in Equation~\eqref{eq:StratifyCDIndex} gives us the following:

\begin{proposition} 
    \label{prop:PhiForHypersimplex}
    Let $n \in \ZZ_{\ge 1}$ and $k \in \aset{0, \dots, n}$. 
    We have that
    \begin{align*} 
        \Phi(\Delta_{k,n}) =& 
        \sum_{  \substack{(C,D) \in \Gamma \\  1\leq  \card{C\cup D} \leq n-2}}  
        \Psi_{cd}(\sigma_{C,D})b (a-b)^{\card{C \cup D} - 1} \\
        \nonumber &+ (a-b)^{n-1} + \binom n k b (a-b)^{n-2} \\
        =&
     \sum_{(i,j) \in \pi_{\#}(\Gamma)}\binom{n}{i}  \binom{n-i}{j} \Psi_{cd}(\Delta_{k-i, n-i-j}) b(a-b)^{i+j-1} \\
         &+ (a-b)^{n-1} + \binom{n}{k} b(a-b)^{n-2}. 
    \end{align*}
    Here $\pi_\# $ takes a pair $(C,D)$ to the cardinalities $(\card{C}, \card{D})$. 
\end{proposition}

As $\Delta_{k,n}$ is highly symmetric, 
we can calculate $\Psi_{ab}$ by partitioning into sums $\Psi_{ab}^{(i)}$ 
of contributions of chains $\sigma_1 \subset \dots \subset \sigma_s$ such that $\dim \sigma_s = i$;
that is~$\Psi_{ab} = \sum_{i = 0}^{k-1} \Psi_{ab}^{(i)}$.
We exemplify this for $\Psi_{ab}(\Delta_{2,5})$, 
and also perform some substitutions towards $\Psi_{cd}(\Delta_{2,5})$.

\begin{ex} 
    \label{ex:U25}
For the $ab$-index of $\Delta_{2,5}$ we sum over $-1\leq i \leq 3$, where $i=-1$ encodes the empty chain. 
For the substitutions we use Lemma~\ref{lm:AlgebraForABCD} 
and recursive knowledge of how to compute $\Psi_{cd}$ from $\Psi_{ab}$ for simplices on groundsets $E$ with fewer elements than $n$:
\begin{align*} 
    \Psi_{ab}^{(-1)} &= (a-b)^4 = (c^2-2d)^2; \\
   \Psi_{ab}^{(0)} &= \tbinom{5}{2}b(a-b)^3 = 10[d,c](a-b) + 10(a-b)^2b(a-b) \\
                   &= 10[d,c](c-2b) + 10(c^2-2d)(d-cb) \\ 
                   &= 10([d,c]c + c^2d - 2d^2) - 10(2[d,c] + c^3 - 2dc)b; \\ 
   \Psi_{ab}^{(1)} &= \tbinom{5}{1}\tbinom{4}{2}\Psi_{ab}(\Delta_{1,2})b(a-b)^2 = 30 \Psi_{cd}(\Delta_{1,2})b(a-b)^2 \\
                   &=  30 \Psi_{cd}(\Delta_{1,2})[d,c] + 30 \Psi_{cd}(\Delta_{1,2}) (c^2 - 2d)b; \\
   \Psi_{ab}^{(2)} &= \tbinom{5}{0}\tbinom{5}{2}\Psi_{ab}(\Delta_{2,3})b(a-b) + \tbinom{5}{1}\tbinom{4}{1}\Psi_{ab}(\Delta_{1,3})b(a-b) \\
    &= 30\Psi_{ab}(\Delta_{1,3})b(a-b) = 30\Psi_{cd}(\Delta_{1,3})d - 30\Psi_{cd}(\Delta_{1,3})cb;   \\
 \Psi_{ab}^{(3)} &=\tbinom{5}{0}\tbinom{5}{1}\Psi_{ab}(\Delta_{2,4})b + \tbinom{5}{1}\tbinom{4}{0}\Psi_{ab}(\Delta_{1,4})b\\
 &= 5\Psi_{cd}(\Delta_{2,4})b + 5\Psi_{cd}(\Delta_{1,4})b.
\end{align*}
As promised, our substitutions eliminate $a$ and bring the $b$ to the end of the monomials.
 
Summing the $\Psi_{ab}^{(i)}$ gives 
\begin{align}
\Psi_{ab}(\Delta_{2,5}) &= 51 a b a b + 29 a b a^2 + 31 a b^2 a + 9 a b^3 + 29 a^2 b a + 21 a^2 b^2 + 9 a^3 b + a^4  
 \\ & \hspace{0.5 cm}+ 51 b a b a + 29 b a b^2 + 31 b a^2 b + 9 b a^3 + 29 b^2 a b + 21 b^2 a^2 + 9 b^3 a + b^4. \nonumber 
\end{align}
Note that the expression is symmetric under exchanging $a$ with $b$, as argued in Lemma~\ref{lm:vanishingb}.
Summing all the expressions in terms of $b,c,d$ yields
    \begin{align} 
        \label{align:}
        \Psi_{cd}(\Delta_{2,5}) = p(c,d)+q(c,d)\cdot b.
    \end{align}

    By Lemma~\ref{lm:vanishingb} we ignore contributions of words that end in $b$ to speed up calculations:
\begin{equation*}
    \Psi_{cd}(\Delta_{2,5}) = 20cdc + 8c^2d + c^4 + 8dc^2 + 14d^2. \qedhere
\end{equation*}
\end{ex}   

Carrying out Example~\ref{ex:U25} in generality gives us a recursive formula: 
\begin{proposition}
Let $n \in \ZZ_{\ge 1}$ and $k \in \aset{2, \dots, n-2}$. 
Write $n_2$ for the remainder of $n$ modulo $2$.
We have that
    \begin{align*}
        \Psi_{cd}(\Delta_{k,n}) =& \sum_{i=0}^{k-1} \binom{n}{i} \sum_{j=\max(0,1-i)}^{n-k-1} \binom{n-i}{j} \Psi_{cd}(\Delta_{k-i,n-i-j}) g_{cd}(i+j-1) \\&+ \binom{n}{k} g_{cd}(n-2) + (c^2-2d)^{\lfloor{\frac{n-1}{2}}\rfloor}(c-2b)^{(n-1)_2}.
\end{align*}
\end{proposition}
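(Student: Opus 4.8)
The plan is to assemble the formula directly from the machinery already in place, leaving essentially only bookkeeping. First I would combine Lemma~\ref{lm:CDIndexStratified}, which gives $\Psi_{cd}(\Delta_{k,n})\sim\Phi(\Delta_{k,n})$, with the closed form of $\Phi(\Delta_{k,n})$ from Proposition~\ref{lm:PhiForHypersimplex}, obtaining
\begin{align*}
\Psi_{cd}(\Delta_{k,n}) &\sim \sum_{(i,j)\in\iota(\Gamma)}\binom{n}{i}\binom{n-i}{j}\,\Psi_{cd}(\Delta_{k-i,\,n-i-j})\,b(a-b)^{i+j-1}\\
&\qquad + (a-b)^{n-1} + \binom{n}{k}b(a-b)^{n-2},
\end{align*}
where in the statement one reads $l=i$ and $s=i+j$, so that $\Delta_{k-l,n-s}=\Delta_{k-i,n-i-j}$ and $g_{cd}(s-1)=g_{cd}(i+j-1)$.

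Next I would apply Corollary~\ref{cor:} termwise. Each factor $b(a-b)^{i+j-1}$ in the sum is replaced by $g_{cd}(i+j-1)$; this is legitimate since $(i,j)\ne(0,0)$ forces $i+j-1\ge0$, the extreme value $i+j=1$ matching the base case $g_{cd}(0)=b=b(a-b)^{0}$. The term $(a-b)^{n-1}$ becomes $(c^2-2d)^{\lfloor(n-1)/2\rfloor}(c-2b)^{(n-1)_2}$, which is precisely the even/odd dichotomy of Corollary~\ref{cor:} with $m=\lfloor(n-1)/2\rfloor$. The term $b(a-b)^{n-2}$ becomes $g_{cd}(n-2)$; here the hypothesis $k\in\{2,\dots,n-2\}$ forces $n\ge4$, so $n-2\ge2$ and the recursion defining $g_{cd}$ is in range.

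The one genuinely combinatorial step is to rewrite the index set $\iota(\Gamma)=\{(i,j)\in(\ZZ_{\ge0})^2\setminus(0,0):i\le k-1,\ j\le k^*-1\}$, $k^*=n-k$, as the double sum $\sum_{i=0}^{k-1}\sum_{j=\max(0,1-i)}^{\,n-k-1}$: for $i\ge1$ the lower bound $\max(0,1-i)$ equals $0$, so all admissible $j$ occur; for $i=0$ it equals $1$, which deletes exactly the excluded pair $(0,0)$; the upper bound $k^*-1=n-k-1$ is unchanged. After this reindexing the three pieces coincide with the three summands in the claimed formula.

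Finally, the expression obtained has the shape $p(c,d)+q(c,d)b$ and is $\sim$-equivalent to the honest $cd$-polynomial $\Psi_{cd}(\Delta_{k,n})$, so by Lemma~\ref{lm:vanishingb} the $b$-part $q$ cancels identically; retaining the factor $(c-2b)^{(n-1)_2}$ in the statement is harmless, as its $b$-contribution is absorbed into this cancellation (one can check this already in $\Delta_{2,4}$). I do not expect any real difficulty beyond this cancellation remark and the reindexing: the conceptual content is entirely in Lemma~\ref{lm:CDIndexStratified}, Proposition~\ref{lm:PhiForHypersimplex} and Corollary~\ref{cor:}, and the only thing to watch is keeping the geometric indices $(|C|,|D|)$ aligned with the arithmetic indices $(i,j)$ throughout the substitution.
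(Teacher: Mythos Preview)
Your proposal is correct and follows exactly the route the paper intends: the paper offers no separate proof beyond ``Carrying out Example~\ref{ex:U25} in generality gives us a recursive formula,'' and your write-up does precisely that, combining Proposition~\ref{lm:PhiForHypersimplex} with Corollary~\ref{cor:} term by term and invoking Lemma~\ref{lm:vanishingb} for the residual $b$'s. Your explicit reindexing of $\iota(\Gamma)$ into the double sum and your remark on the surviving $(c-2b)^{(n-1)_2}$ factor are in fact more careful than anything the paper spells out.
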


\begin{remark} 
    There are other ways to approach this computation; 
    our down-to-earth  way generalizes well to connected split matroids.
    For other approaches see for example the unpublished manuscript \cite{gm}, 
    who use generating functions and the theory of Coxeter groups.
    This way fully exploits the fact that the vertices of $\Delta_{k,n}$ 
    lie in one single orbit under the action of the symmetric group $\operatorname{Sym}(E)$.
\end{remark}

\section{Matroids, cuts of the hypersimplex, and the case of one cut}
    \label{sec:Matroids}

\subsection{Matroids as slices of \texorpdfstring{$\Delta_{k,n}$}{Delta(k,n)}}
    \label{sub:SlicesOfDeltaKN}

A matroid base polytope $\scrP(M) \subset \RR^E$ is a 0/1-polytope 
with $\vertici{\scrP(M)} \subset \vertici{\Delta_{k,n}}$
and edges equal to parallel translates of $\mbfe_i - \mbfe_j$ for some $i,j$.
The hypersimplex $\Delta_{k,n}$ is our main example, 
and we recall how to get any other $\scrP(M)$ by chopping off pieces from $\Delta_{k,n}$.

The vertices $\vertici{\scrP(M)}$ are indicator vectors of the family of bases $\calB(M)$,
the integer $k$ is the \emph{rank} of the matroid $M$,
and the condition on the edges encodes the usual \emph{basis exchange property}.
The \emph{connected components} are the equivalence classes $E_1, \dots, E_c$ of $E$  under the relation generated by 
$i \sim j$ when there is an edge in $\scrP(M)$ that is a parallel translate of $\mbfe_i - \mbfe_j$.
If $c=1$ we say that $M$ is \emph{connected}.
We have the decomposition $M = ( M|{E_1}) \oplus \dots \oplus ( M|{E_c})$;
here $M|{E_i}$ is the restriction of $M$ to $E_i$.
The bases of the restriction are $B \cap E_i$ where $B$ is in $\operatorname{arg-max}_{B \in \calB(M)} \langle \mbfe_{E_i}, \mbfe_B \rangle$. 
This induces a decomposition of base polytopes
$\scrP(M) = P(M|{E_1}) \times \dots \times P(M|{E_c})$,
so in particular $\dim \scrP(M) = n - c$.
In light of the product formula in Equation~\eqref{eq:ProductFormula}, we focus on connected matroids.

Recall, e.g.~from \cite{oxl06a}, the following:
subsets of a basis are called \emph{independent};
inclusion-wise minimal dependent sets are \emph{circuits};
unions of circuits are \emph{cyclic sets};
the \emph{rank} $\rk S$ of $S \subset E$ is $\max_{B \in \calB(M)} \langle \mbfe_S, \mbfe_B \rangle$, 
and $F \subset E$ is a \emph{flat} if $\rk(F) < \rk(F \cup \aset{i})$ for all $i \in E \setminus F$.
The dual $M^*$ of $M$ is given by the image in $\Delta_{n-k,n}$ of $\scrP(M)$ under the involution $(-)^* : \mbfp \mapsto -\mbfp + \mbf 1$.
Flats and cyclic sets ordered by inclusion form two distinct lattices: $\calL(M)$ and $\Cyc(M)$.
Moreover, $\calL(M)$  is anti-isomorphic to $\Cyc(M^*)$ of the dual $M^*$:
the map $F \mapsto E \setminus F$ is an order-reversing bijection.
This follows from the fact that the complement of a \emph{cocircuit}, 
i.e.~a circuit of the dual matroid, 
is a \emph{hyperplane}, 
i.e.~a rank $k-1$ flat.
Cyclic flats $\calZ(M)$  are a sublattice of both $\calL(M)$ and $\Cyc(M)$.
The algebraic operations are compatible as follows:

\begin{proposition} 
    \label{prop:MeetAndJoinOfZM}
Let $M$ be a matroid and $F, G \in \calZ(M)$.
The join $F \join_{\calZ} G$ equals the join of $F, G$ in $\calL(M)$.
Likewise, the meet $F \meet_{\calZ} G$ equals the meet of $F, G$ in $\Cyc(M)$.
\end{proposition}

\begin{proof} 

For a proof we refer to \cite[Section 2]{bd08}. 
Specifically, 
the join in $\calZ(M)$ coincides with the closure of the union, 
$F \vee_{\calZ}G = \operatorname{cl}(F \cup G)$, 
and the meet is the union of all circuits contained in the intersection, 
$F \wedge_{\calZ} G =\bigcup \{C \in \calC(M) : C \subseteq F\cap G\}$.
\end{proof}

Any lattice appears as $\calZ(M)$ for some matroid $M$ \cite[Thm 2.1]{bd08}.
Moreover, the pair $\calZ(M)$ and $r|_{\calZ(M)}$ determines $M$:

\begin{theorem}[Theorem 2.2 in \cite{bd08}]
Let $M$ be a matroid, $\calZ(M)$ its cyclic flats, $\rk : 2^E \to \ZZ_{\ge 0}$ its rank function.
    We have that
\begin{align} 
    \label{eq:CuttingOut}
    \scrP(M) = \aset{x \in \Delta_{k,n} \suchthat \langle \mbfe_F, x \rangle \le \rk(F) \quad \forall \text{ proper } F \in \calZ(M) }.
\end{align} 
\end{theorem}

We denote by $ \properZ(M)$ the set of proper cyclic flats.
Geometrically, the pairs $(F, r) \in \properZ(M)$  
tell us how to carve out $\scrP(M)$ from $\Delta_{k,n}$ using few cuts.
We slightly abuse notation and write $H(F, r)$ for $H(\mbfe_F,r)$,
and even more, 
if $r$ is clear from the context, then we simply write $H_F$.
So Equation~\eqref{eq:CuttingOut} becomes
\[\scrP(M) = \Delta_{k,n} \cap \bigcap_{(F,r) \in \properZ(M)} H^-(F, r). \]
In the next subsection we look at a class of matroids for which these cuts are minimal.
Moreover, geometrically and combinatorially the interaction of the pairwise cuts is \emph{nice}. 

\begin{remark} 
    \label{remark:LooplessAndColoopless}
A singleton $\aset{e}$ is a \emph{loop} if it is dependent in $M$, 
and a \emph{coloop} if it is dependent in $M^*$;
i.e.~$e$ is contained in every basis of $M$.
If $S, T$ are the sets of loops and coloops, respectively, 
then $M = M | S \oplus M | (E \setminus (S \cup T)) \oplus M | T$.
Thus, assuming that $M$ is connected implies, in particular, that $M$ has no loops or coloops.
\end{remark}

\subsection{Connected split matroids}
    \label{sub:ElementarySplitMatroids}
For $A \subset E$ the \emph{cover} of $A$ is the family $\Cover A$ of $k$-subsets $B \subset E$ such that $\card{B \cap A} \ge \rk A + 1$.
These are the vertices $\mbfe_B \in \vertici{\Delta_{k,n}}$ excluded by a given $F$ in Equation~\eqref{eq:CuttingOut};
e.g. those in the interior $(H^+(F, r))^\circ$.

In \cite{fs24c} Ferroni and Schröter capture when $\calB(M) \cup \Cover A$ is a matroid,
which means relaxing the inequalities from Equation~\eqref{eq:CuttingOut}.

\begin{definition} 
  A subset $S \subset E$ is \emph{stressed} if both the restriction $M|A$ and the contraction $M/A$ are uniform matroids.
\end{definition}

\begin{proposition}[Theorem 3.12 in \cite{fs24c}]
    \label{proposition:MatroidUnionCover}
    Let $M$ be a matroid on $E$, and $A \subset E$ a subset.
    If $A$ is stressed, then $\calB(M) \cup \Cover A$ is the family of bases of a matroid on $E$.

\end{proposition}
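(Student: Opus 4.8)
The plan is to verify directly that the candidate family $\calB' := \calB(M) \cup \Cover A$ satisfies the basis exchange axioms, leveraging the hypothesis that $A$ is stressed. First I would record the structure that being stressed gives us: writing $\rk A = a$ and $\rk M - \rk(M/A) = \rk A$, the restriction $M|A$ is the uniform matroid $U_{a, |A|}$ and the contraction $M/A$ is the uniform matroid $U_{k-a, |A^*|}$ on the complement $A^* = E \setminus A$. Concretely, this means a $k$-subset $B$ is a basis of $M$ if and only if $|B \cap A| \le a$ and $|B \cap A^*| \le k - a$; equivalently $|B \cap A| = a$ exactly (any deficit on the $A$ side must be made up on the $A^*$ side, but that side is already capped). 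The covered sets $\Cover A$ are exactly the $k$-subsets $B$ with $|B \cap A| \ge a+1$. So $\calB'$ is precisely the set of $k$-subsets $B$ with $|B \cap A| \ne a$ replaced by "$|B\cap A| \neq a$ allowed to exceed", i.e.\ $\calB' = \{ B : |B| = k,\ |B \cap A| \ge a \text{ or } |B\cap A| \le a\}$ — one sees that in fact $\calB'$ consists of all $k$-subsets $B$ with $|B \cap A^*| \le k - a$, because the only constraint defining $\calB(M)$ that $\Cover A$ relaxes is the upper bound on $|B \cap A|$, while the bound $|B\cap A^*|\le k-a$ (equivalently $|B\cap A| \ge a$) persists across both pieces.

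Given this clean description $\calB' = \{ B \subseteq E : |B| = k,\ |B \cap A^*| \le k - a \}$, the second step is to check the exchange axiom. I would take $B_1, B_2 \in \calB'$ and $x \in B_1 \setminus B_2$, and produce $y \in B_2 \setminus B_1$ with $(B_1 \setminus x) \cup y \in \calB'$; the only thing to control is the quantity $|((B_1\setminus x)\cup y) \cap A^*|$. There are two cases according to whether $x \in A$ or $x \in A^*$. If $x \in A^*$, then removing $x$ already decreases $|B_1 \cap A^*|$, so adding back any $y \in B_2 \setminus B_1$ keeps us at $|B\cap A^*| \le k-a$ unless $y \in A^*$ and we were at the boundary — but removing $x\in A^*$ created slack, so it is fine. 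If $x \in A$, removing $x$ does not change $|B_1 \cap A^*|$, so I need to find $y$ with either $y \in A$, or $y \in A^*$ with $|B_1 \cap A^*| < k - a$; a counting argument comparing $|B_1 \cap A^*|$ and $|B_2 \cap A^*|$ shows such a $y$ exists — if $|B_1\cap A^*| = k-a$ is already maximal, then since $|B_1|=|B_2|=k$ and $|B_1\cap A| = a < |B_1\cap A|$ is forced... here one checks $B_2\setminus B_1$ must contain an element of $A$, because $|B_1 \cap A| = a \le |B_2 \cap A|$.

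The main obstacle, and where I would spend the most care, is this last counting step in the case $x \in A$ with $B_1$ already saturating the $A^*$-bound: I must be sure that $B_2 \setminus B_1$ meets $A$. This follows because $|B_1 \cap A| = k - |B_1 \cap A^*| = k - (k-a) = a$, while $|B_2 \cap A| = k - |B_2 \cap A^*| \ge k - (k-a) = a$; combined with $|B_1 \cap A| = a = |(B_1 \setminus x) \cap A| + 1$, there is room, and a standard double-counting on the finite set $A$ (using $|B_1 \cap A| \le |B_2 \cap A|$ together with $x \in B_1 \cap A \setminus B_2$) yields an element of $(B_2 \cap A) \setminus B_1$. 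Alternatively — and this may be the cleaner route to write up — I would invoke the polytopal criterion: $\calB'$ is a matroid basis family precisely when $\mathscr{P}' := \conv\{\mbfe_B : B \in \calB'\}$ has all edges parallel to differences $\mbfe_i - \mbfe_j$. Since $\mathscr{P}' = \Delta_{k,n} \cap H^-(\mbfe_{A^*}, k-a)$ by the description above, and $\Delta_{k,n}$ is a generalized permutohedron (hence so is any intersection with a half-space of this form), one gets the matroid property for free from the theory of generalized permutohedra; I would decide between the two presentations based on which keeps the exposition self-contained given what has been cited.
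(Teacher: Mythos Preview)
First, note that the paper does not supply its own proof of this proposition; it is quoted as Theorem~3.12 of \cite{fs24c} and used as a black box. So there is no in-paper argument to compare against, and I comment only on correctness.

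Your argument breaks at the very first deduction. From ``$M|A$ and $M/A$ are uniform'' you conclude that the bases of $M$ are exactly the $k$-subsets $B$ with $|B\cap A|=a$, and hence that $\calB' = \{B : |B|=k,\ |B\cap A^*|\le k-a\}$. This is false. Uniformity of $M|A$ says every $a$-subset of $A$ is independent in $M$; uniformity of $M/A$ says every $(k-a)$-subset of $A^*$ extends to a basis of $M$. Neither forces an arbitrary basis of $M$ to meet $A$ in exactly $a$ elements. Concretely: take $E=\{1,2,3,4\}$, $k=2$, $\calB(M)=\binom{E}{2}\setminus\{\{1,2\}\}$, and $A=\{1,2\}$. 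Then $\rk A=1$, $M|A\cong U_{1,2}$, $M/A\cong U_{1,2}$, so $A$ is stressed; yet $\{3,4\}\in\calB(M)$ has $|\{3,4\}\cap A|=0\ne 1$. Your description of $\calB'$ would omit $\{3,4\}$, whereas the correct $\calB(M)\cup\Cover A$ is all of $\binom{E}{2}$.

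What you have actually written down is the basis family of the direct sum $U_{a,|A|}\oplus U_{k-a,|A^*|}$, which is one particular matroid with $A$ stressed, not the general one. Both your exchange-axiom verification and your alternative polytopal route (identifying $\mathscr P'$ with $\Delta_{k,n}\cap H^-(\mbfe_{A^*},k-a)$) rest on this incorrect description, so neither goes through. A correct argument has to treat $\calB(M)$ as an arbitrary matroid basis system subject only to the uniformity of $M|A$ and $M/A$, and the exchange verification then genuinely splits into cases according to whether each of $B_1,B_2$ lies in $\calB(M)$ or in $\Cover A$; the mixed cases are where the uniformity hypotheses actually do work.
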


Moreover, stressed subsets can be characterized in terms of the lattice of cyclic flats.
 
\begin{proposition}[Proposition~3.9 in \cite{fs24c}]
    \label{proposition:StressedIsSaturatedChain}
Let $M$ be a matroid on $E$ without loops or coloops.
A subset  $A \subset E$ is a stressed subset with $\Cover A \ne \varnothing$ 
if and only if  
$\varnothing \subset A \subset E$ is a saturated chain in $\calZ(M)$,
i.e.~$A$ is inclusion-wise both maximal and minimal among proper cyclic flats.
\end{proposition}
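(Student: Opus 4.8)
\emph{Overall plan.} The hypotheses and the conclusion are self-dual, which I would exploit throughout. Since $M^*|(E\setminus A)=(M/A)^*$ and $M^*/(E\setminus A)=(M|A)^*$, and a matroid is uniform iff its dual is, $A$ is stressed in $M$ iff $A^*:=E\setminus A$ is stressed in $M^*$; and complementing a $k$-set witnessing $\Cover A\ne\varnothing$ yields an $(n-k)$-set witnessing $\Cover_{M^*}A^*\ne\varnothing$, and conversely. Moreover $Z\mapsto E\setminus Z$ is the order-reversing bijection $\calZ(M)\to\calZ(M^*)$ (and $M^*$ is again loopless and coloopless, as $M$ is), carrying the chain $\varnothing\lessdot A\lessdot E$ to $\varnothing\lessdot A^*\lessdot E$. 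So whenever I prove a statement about ``$M|A$'' and ``$\varnothing\lessdot A$'', the dual statement about ``$M/A$'' and ``$A\lessdot E$'' follows by repeating the argument for $(M^*,A^*)$; the condition $\Cover A\ne\varnothing$ I would handle directly.

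\emph{Forward direction.} Assume $A$ is stressed with $\Cover A\ne\varnothing$, and fix a $k$-set $B$ with $\card(B\cap A)\ge\rk A+1$. First I would read off the numerical consequences: $\rk A<\card A$ (so $A$ is dependent, in particular $A\ne\varnothing$), $\rk A+1\le k$, and $A\ne E$ (else $k=\card B\ge\rk E+1$). Then $M/A$ is uniform of rank $\ge 1$ on the nonempty set $A^*$, hence loopless, which forces $\operatorname{cl}_M(A)=A$; and $M|A=U_{\rk A,\card A}$ has $\rk A<\card A$, hence is coloopless, so $A$ is a union of circuits. Thus $A\in\calZ(M)$ with $\varnothing\ne A\ne E$. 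For minimality I would take any $Z\in\calZ(M)$ with $\varnothing\subsetneq Z\subseteq A$ and restrict the uniform matroid $M|A$ to $Z$, obtaining $M|Z=U_{\min(\rk A,\card Z),\card Z}$: either $\card Z\le\rk A$, making $Z$ independent -- impossible, as a nonempty cyclic set contains a circuit -- or $\rk Z=\rk A$, in which case monotonicity of closure gives $\operatorname{cl}_M(Z)\subseteq\operatorname{cl}_M(A)=A$, equality of ranks forces $\operatorname{cl}_M(Z)=A$, and since $Z$ is a flat, $Z=A$. Hence $A$ covers $\varnothing$ in $\calZ(M)$, and by the duality reduction $E$ covers $A$; i.e.~$\varnothing\subset A\subset E$ is a saturated chain.

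\emph{Backward direction.} Assume $\varnothing\lessdot A\lessdot E$ in $\calZ(M)$. The engine here is the lemma: \emph{for a cyclic flat $A$ of $M$, the cyclic flats of $M|A$ are exactly the cyclic flats of $M$ contained in $A$}, i.e.~$\calZ(M|A)=[\varnothing,A]_{\calZ(M)}$. For $X\subseteq A$, cyclicity transfers because the circuits of $M$ and of $M|A$ lying in $X$ coincide; and since $\operatorname{cl}_{M|A}(X)=\operatorname{cl}_M(X)\cap A$ while monotonicity gives $\operatorname{cl}_M(X)\subseteq\operatorname{cl}_M(A)=A$, being a flat of $M|A$ is equivalent to being a flat of $M$. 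As $A$ covers $\varnothing$, this interval is $\{\varnothing,A\}$, and $M|A$ is loopless ($M$ has no loops) and coloopless ($A$ is a union of circuits), so Equation~\eqref{eq:CuttingOut} applies with no nontrivial cuts and yields $\mathscr{P}(M|A)=\Delta_{\rk A,A}$, i.e.~$M|A$ is uniform. Applying this to $(M^*,A^*)$ -- where $A^*$ is an atom of $\calZ(M^*)$ because $A\lessdot E$ in $\calZ(M)$ -- shows $M^*|A^*=(M/A)^*$ is uniform, hence so is $M/A$, so $A$ is stressed. Finally $\Cover A\ne\varnothing$: since $A\ne\varnothing$ is cyclic we have $\rk A+1\le\card A$; since $A\ne E$ is a flat we have $\rk A+1\le k$; and $k-\rk A-1\le\card{A^*}$; so a $k$-set formed from $\rk A+1$ elements of $A$ and $k-\rk A-1$ elements of $A^*$ lies in $\Cover A$.

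\emph{Main obstacle.} Everything except one point is bookkeeping with ranks, closures, minors and duality. The hard part will be the lemma $\calZ(M|A)=[\varnothing,A]_{\calZ(M)}$ for a cyclic flat $A$ -- specifically the claim that a flat of $M|A$ is again a flat of $M$, which is exactly where one must use that $A$ is closed (via monotonicity of $\operatorname{cl}$), not merely that $A$ is cyclic -- together with verifying that $M|A$ is loopless and coloopless so that the cyclic-flat-cut description~\eqref{eq:CuttingOut} can be invoked with an empty set of cuts. A variant avoiding \eqref{eq:CuttingOut} would instead show head-on that a loopless, coloopless matroid whose only cyclic flats are $\varnothing$ and its ground set must be uniform; either way this reduction is the crux.
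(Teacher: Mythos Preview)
The paper does not give its own proof of this proposition; it is quoted verbatim from \cite{fs24c} and used as a black box, so there is nothing to compare your argument against. That said, your proof is correct and self-contained: the duality reduction is set up properly, the forward direction is clean, and in the backward direction the lemma $\calZ(M|A)=[\varnothing,A]_{\calZ(M)}$ for a cyclic flat $A$ (with the key observation that $\operatorname{cl}_M(X)\subseteq A$ when $A$ is closed) is exactly the right engine, after which either Equation~\eqref{eq:CuttingOut} or the direct characterization of uniform matroids via cyclic flats finishes it. The only spot where you are terse is the assertion $k-\rk A-1\le\card{A^*}$ at the end; this is automatic from $k-\rk A=\rk(M/A)\le\card{E\setminus A}$, but it would not hurt to say so.
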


One can iteratively relax stressed subsets.
On $\calZ(M)$ each relaxation deletes a saturated chain of the form $\varnothing 
\lessdot A \lessdot E$  
and everything else remains untouched \cite[Proposition 3.19]{fs24c}.
We arrive thus to connected split matroids:

\begin{definition} 
    \label{definition:ElementarySplitMatroid}
    A connected matroid is split if relaxing all the stressed subsets with non-empty cover yields a uniform matroid. 
\end{definition}

The relation to the objects introduced in \cite[Def. 9]{js17} is elaborated in \cite[Sec.~4.2]{fs24c}.

\subsection{Cuspidal matroids}
    \label{sub:CuspidalMatroids}

For the calculation of the \texttt{cd}-index we analyze the pieces that are eliminated by cuts.

These are called \emph{cuspidal matroids} in \cite[Subsection~3.5]{fs24c}.

They depend on a choice of $F \subset E$ and an integer $r$ that determine a hyperplane $H(F,r)$ to perform a cut: 
    \begin{align} 
        \label{eq:cuspidal}
        \scrP(\Lambda^{r,F}_{k,n}) = \Delta_{k,n} \cap H^-(F,r).
    \end{align}
    The bases of $\Lambda^{r,F}_{k,n}$ are the $k$-subsets $B \in \tbinom{E}{k}$ satisfying $\card{B \cap F} \le r$. 
    If $r \ge \min(\card{F}, k)$ the cut is trivial, so we recover $\Delta_{k,n}$.
    Otherwise, for any $B' \in \tbinom F r $ and $e \in F^*$ the set  $B' \cup \{e\}$ is independent; 
    so $F$ is a flat.
    The circuits of $\Lambda^{r,F}_{k,n}$ are exactly the subsets $C \in \tbinom{F}{r+1}$, $C \in \tbinom{E}{k+1}$. 
    Hence $F$ is a cyclic flat, and it is the unique proper one.
    The isomorphism type of $ \Lambda^{r,F}_{k,n}$ depends solely on the parameters $k$ and $n$, which define the ambient hypersimplex, alongside $r = \rk(F)$ and  $h = \card{F}$,  
    which dictate the geometry of the cut.
    In the notation from \cite{fs24c} we have
\[
    \scrP(\Lambda_{k-r,k,n-h,n})= \aset{x \in \Delta_{k,n} \suchthat \sum_{i=1}^{h} x_i \leq r}.
\]
To understand the \texttt{cd}-index of these pieces $\Lambda$,
we refine Lemma~\ref{lm:FacesOfDeltakn} to describe the faces of $\Lambda$.

\begin{proposition} 
    \label{prop:FacesOfCuspidal}

Let $(F,r) \in \calP(E) \times \NN$ and denote the complement by $F^* = E \setminus F$.  
Set $h = \card{F}$, 
and write $r^* = k - r$,  $h^* = n - h$.
Choose a face $\sigma_{C,D} \prec \Delta_{k,n}$, given by subsets $C, D \subset E$:
\begin{enumerate} 
    \item We have $\sigma_{C,D} \subset H^+(F,r)$ if and only if at least one of the following holds:
   \begin{align} 
       \label{eq:ContainedInHPlus}
     \card{C\cap F}\geq r \quad \text{ or } \quad  \card{D\cap F^*}\geq h^*-r^*.
   \end{align}
\item \label{item:TheDual} Dually, $\sigma_{C,D} \subset H^-(F,r)$ if and only if at least one holds:
   \begin{align} 
       \label{eq:ContainedInHMinus}
     \card{C\cap F^*}\geq r^* \quad \text{ or } \quad  \card{D\cap F}\geq h-r.
   \end{align}

\item \label{item:SmallerCuspidal}  The polytope $\sigma_{C,D} \cap H^-(F,r)$ is isomorphic to $\Lambda^{r', F'}_{k',n'}$ 
  with parameters 
  $r' = r - \card{F \cap C}$, 
  $F' = F \setminus (C \cup D)$, 
  $k' = k - \card{C}$ and 
  $n' = n - \card{C \cup D}$.
\end{enumerate}
\end{proposition}

\begin{proof} 
Recall that $\sigma_{C,D}= \Delta_{\card{C},C}\times \Delta_{k',n'}\times \Delta_{0,D}$:
the vertices $\mbfe_B$ of $\sigma_{C,D}$ are those that satisfy $C \subset B$ and $D \cap B = \varnothing$.
For the first item: 
\begin{itemize} 
  \item  If $\card{C \cap F} \ge r$, then $\langle \mbfe_B, \mbfe_F \rangle = \card{B \cap F} \geq \card{C \cap F} \ge r$ for all vertices of $ \sigma_{C,D} $.
  \item Assume that $\card{D\cap F^*}\geq h^*-r^*$.  
    Note that $\angbra{\mbfe_B, \mbfe_F} = k -  \angbra{\mbfe_B, \mbfe_{F^*}} $, 
    and that $B \cap D = \varnothing$ implies that $\card{B \cap F^*}$ 
    is at most $\card{F^* \setminus D} = \card{F^*} - \card{F^* \cap D} \le h^* - (h^* - r^*) = r^*$.
    So $\angbra{\mbfe_B, \mbfe_F} \ge k - r^* = r$. 
  \item Conversely, assume that $\card{C \cap F} \le r-1$ and $\card{D\cap F^*} \le h^*-r^*-1$.
    The latter yields $\card{F^* \setminus D} 
        = \card{F^*} - \card{F^* \cap D} \ge  r^*+ 1$.
    We choose a basis $B = C  \cup S \cup T$,
    with 
    \begin{itemize} 
      \item $S \subset F \setminus (C \cap F)$ 
          and $\card{S} = (r-1) - \card{C \cap F} \ge 0$; 
      \item $T \subset F^* \setminus (D \cup C) $ 
        and $\card{T} = k - \card{S} - \card{C} = r^* + 1 - \card{C \cap F^*}$;
        this is possible because $C$ and $D$ are disjoint, implying that
        $\card{ F^* \setminus (D \cup C) } = \card{F^* \setminus D} - \card{C \cap F^*}$.
    \end{itemize} 

\end{itemize}
With these choices we get that $\card{B} = k$, 
and $\langle \mbfe_B, \mbfe_F \rangle = \card{(C \cup S) \cap F} = r - 1$, showing that $\mbfe_B$ is not contained in $H^+(F,r)$.

The second item proceeds dually. 
For the third item, let $M$ be the matroid with polytope $\scrP(M)$ equal to $\sigma_{C,D}$.
All the bases of $M$ contain $C$ and avoid $D$, 
so we take the matroid contraction $M/C$ and then the deletion $(M/C)\setminus D$.
This is isomorphic to $\Delta_{k',n'}$.
The set $F$ gets mapped to $F \setminus (C \cup D)$ and its rank in $(M/C)\setminus D$ equals $r - \card{F \cap C}$, as claimed.
\end{proof} 

\begin{remark} 
    \label{re:AFaceIsNotCut}
    Assume that $\sigma_{C,D} \subset H^-(F,r)$.
    Consider the inequalities from Equation~\eqref{eq:ContainedInHMinus}:
    if $\card{C\cap F^*}\geq r^*$, then $r' \ge k'$;
    otherwise $\card{D\cap F}\geq h-r$, yielding $r' \ge \card{F'}$.
    Both conditions imply $\Lambda^{r', F'}_{k', n'} = \Delta_{k', n'}$, 
    as expected since $\sigma_{C,D} \subset H^-(F,r)$.
\end{remark}

\begin{lemma} 
    \label{lm:PhiForCuspidal}

    Let $F \subset E$ and choose $r \in \ZZ_{\ge 1}$. 
    Write $\EmVe$ as in Equation~\eqref{eq:EmptyVertices}, $\Gamma$ as in Equation~\eqref{eq:FacesOfDelta}, 
    and $\tau = \Delta_{k,n} \cap H(F, r)$.
    We have that:
    \begin{align} 
        \label{eq:LambdaCd}
        \Phi(\Lambda^{r,F}_{k,n}) &= 
        \sum_{ \substack{
                (C,D) \in \Gamma \\ 
                \card{C \cap F}< r \\
                \card{D \cap F^*} < h^* - r^* 
             } }  
        \Psi_{ab}\left(\sigma_{C,D} \cap H(F, r)^-
        \right)b (a-b)^{\card{C \cup D} - 1} \\
        \nonumber &+ \sum_{ \substack{\eta \prec \tau \\ \dim(\eta)\geq 1}} \Psi_{ab}(\eta) b (a-b)^{\codim \eta} + \EmVe(\Lambda^{r,F}_{k,n}).
    \end{align}
\end{lemma}
\begin{proof} 
    From the count in Proposition~\ref{prop:PhiForHypersimplex},
    use Equation~\eqref{eq:ContainedInHPlus} to throw away all the $\sigma_{C,D}$ contained in $H^+(F,r)$.
    This gives the first sum in the right hand side.
    The faces contained in $\tau$ need to be readded, giving the second sum.
\end{proof}

The faces $\eta \prec \tau$ from Equation~\eqref{eq:LambdaCd} can be understood via the following result.

\begin{lemma} 
    \label{lm:tauincuspidal}
    The polytope $\tau = \Delta_{k,n} \cap H(F, r)$  is isomorphic to $\Delta_{r,h} \times \Delta_{k - r, n - h}$.
\end{lemma}

\begin{proof} 
  Recall that $h = \card{F}$ and $n - h = \card{F^*}$. 
    The bases $B$ for which $\mbfe_B$ is in $H_F = \aset{\angbra{\mbfe_F, \mbfe_B} = r}$ are precisely those for which $\card{F \cap B} = r$ 
    and $\card{F^* \cap B} = k - r$.
\end{proof}

  \begin{remark}
\label{rmk:FacesLambda}
Formula \eqref{eq:LambdaCd} is recursive because by Item~\ref{item:SmallerCuspidal} of Proposition~\ref{prop:FacesOfCuspidal} and Remark~\ref{re:AFaceIsNotCut} each $\sigma_{C,D} \cap H^-(F,r)$ is itself cuspidal or a hypersimplex.
We do not attempt to write an appealing formula, 
however we provide the function \texttt{compute\textunderscore P\textunderscore Lambda\textunderscore cd(k, n, r, h)} in our computer program.
\end{remark}

Using the same ideas as from Example~\ref{ex:U25} we get a recursive function to calculate the \texttt{cd}-index directly without passing through the \texttt{ab}-index.

\section{The geometry of connected split matroids}
    \label{sec:TheGeometryOfSplitConnectedMatroids}

    We explore several geometric features of the base polytope for connected split matroids, 
aimed at facilitating the application of Kim's formula.
    Throughout this section, we keep the following illustrative example in mind.

\begin{figure}[htpb]
%
    \centering
    \includegraphics[width=0.5\linewidth]{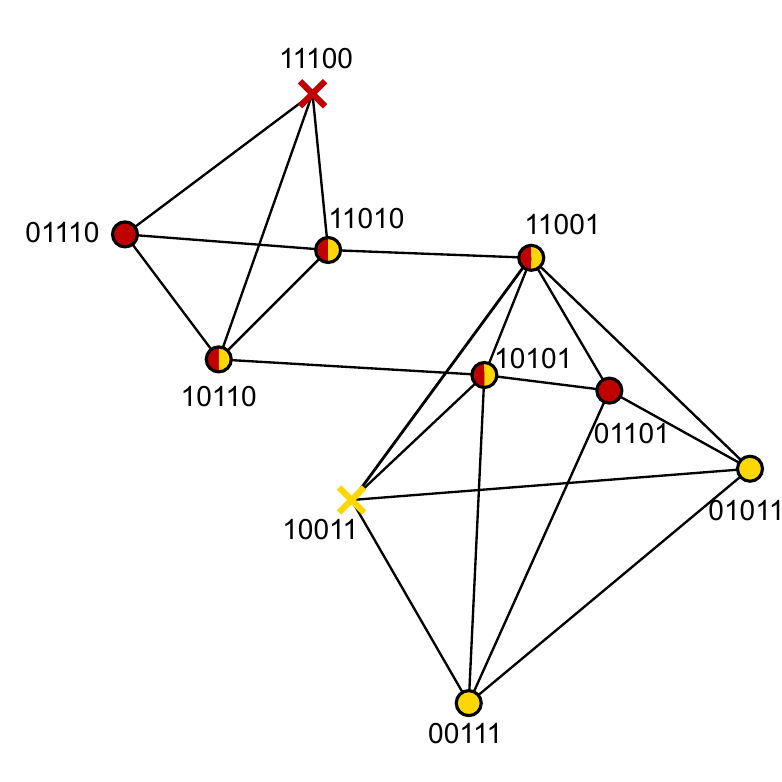}
    \caption{
      The cuts $H_F = H(123,2)$ and $H_G = H(145,2)$ delete, respectively,        
      the vertices 11100 and 10011 from $\Delta_{3,5}$. 
      While not all edges are shown, the figure displays a pair of deletion and contraction facets (a simplex and a bipyramid), 
    alongside the intersection $\Delta_{3,5} \cap H_F \cap H_G$  
    which corresponds to a modular pair of flats (the central square $\tau$).
The face $\sigma \in \Delta_{k,n}$ satisfying $\tau = \sigma \cap H_G$ is the bipyramid formed by the deleted vertices. 
These lie in the regions $-+$ and $+-$, illustrating Corollary~\ref{cor:PlusPlusIsEmpty}. }
    \label{fig:Delta35}
\end{figure}

\begin{example} 
    \label{ex:ModularAffectsCalculation}
  Consider $\scrP(M) \subset \Delta_{3,5}$ with non-bases $F = \aset{1,2,3}$ and $G = \aset{1,4,5}$, 
  i.e.~obtained via the cuts $H^-(F,2)$ and $H^-(G,2)$. 
Figure~\ref{fig:Delta35} highlights the vertices adjacent to $\mbfe_{\{1,2,3\}}$ in red, 
and those adjacent to $\mbfe_{\{1,4,5\}}$ in yellow; 
these belong to the faces of $\scrP(M)$ contained in $H_F$ and $ H_G$, respectively. 
We make some observations:
\begin{itemize} 
    \item Bicolored vertices: The set of bicolored vertices forms a matroid isomorphic to:
      \begin{align*}
        \Delta_{1,F \cap G} \times \Delta_{1,F \setminus G} \times \Delta_{1, G \setminus F} \times \Delta_{0, E \setminus (F \cup G)} &= \\ 
 \Delta_{1,\aset{1}} \times \Delta_{1, \aset{2,3}} \times \Delta_{1, \aset{4,5}} \times \Delta_{0, \varnothing} &\isom \Delta_{1,2} \times \Delta_{1,2}. 
\end{align*}

\item Modular pairing: The two cyclic flats $F$ and $G$ form a modular pair: 
  \[ \rk(F) + \rk(G) = 2+2=1+3=\card{F\cap G}+\rk(F\cup G).\]
\item Mutually exclusive cuts: No vertex is eliminated twice; 
  using notation introduced below, $\Delta_{3,5} \cap O(++)$ is empty.
\end{itemize}
\end{example}

\subsection{Cuts only intersect for modular pairs}
    \label{sub:CutsOnlyIntersectForModularPairs}

The first two observations of Example~\ref{ex:ModularAffectsCalculation} follow from the work of Ferroni and Schröter \cite{fs25}. 
We provide a streamlined proof here.

\begin{lemma} 
    \label{lemma:TwoCuts}
    Let $M$ be a connected split matroid, and let $(F,r), (G,s)$ be distinct elements of~$\properZ(M)$. 
    Set $Q = \Delta_{k,n} \cap H^+(F,r) \cap H^+(G,s)$ and define 
    $a = \card{F \setminus G}$, 
    $b = \card{G \setminus F}$ 
    and $\gamma = \card{F \cap G}$.
    The flats $F$ and $G$ form a modular pair
    if and only if
    $Q$ is non-empty.
    Furthermore, if $Q \ne \varnothing$, then
    \begin{align}
        Q &= \Delta_{k,n} \cap H(F,r) \cap H(G,s) \\
          &= \Delta_{\gamma, F \cap G} 
            \times \Delta_{r - \gamma, F \setminus G} 
            \times \Delta_{s - \gamma, G \setminus F} 
            \times \Delta_{0, E \setminus (F \cup G)} \\
\nonumber          & \isom \Delta_{r - \gamma, a} \times \Delta_{s - \gamma, b}.
     \end{align} 
\end{lemma}

\begin{proof} 
  The polytope $Q$ is non-empty if and only if there exists a vertex $\mbfe_B\in \vertici{\Delta_{k,n}}$ satisfying $\langle \mbfe_F, \mbfe_B \rangle \geq r$ and $\langle \mbfe_G, \mbfe_B \rangle\geq s$.
Suppose such a vertex $\mbfe_B$ exists.
We have the following sequence of inequalities:
\begin{align} 
    \label{eq:CutsAreEmpty}
    k + \card{F \cap G} &\geq \langle \mbfe_{F\cup G}, \mbfe_B \rangle + \langle \mbfe_{F\cap G}, \mbfe_B \rangle \nonumber \\ 
                        &= \langle \mbfe_{F}, \mbfe_B \rangle + \langle \mbfe_{G}, \mbfe_B \rangle \nonumber \\  
                        &\geq r + s \nonumber \\
                        &\geq \rk(F\cup G) + \rk(F\cap G).
\end{align}
Here, the first inequality follows from $\angbra{\mbfe_S, \mbfe_B } 
\le \min(\card{S}, \card{B})$;
the second from our hypothesis on~$B$;
and the third is submodularity.
Since $M$ is connected and split, 
and $F,G \in \properZ(M)$, we have that $E = F \vee_{\calZ}G = \operatorname{cl}(F \cup G)$,
yielding~$\rk(F \cup G) = k$.
Likewise,  $\varnothing = F \wedge_{\calZ} G =\bigcup \{C \in \calC(M) : C \subseteq F\cap G\} $ implies that $F \cap G$ is independent, so $\rk(F \cap G) 
= \card{F \cap G}$. 
This forces equality throughout Equation~\eqref{eq:CutsAreEmpty}, which implies that $F$ and $G$ are a modular pair, and moreover 
\[
\langle \mbfe_F, \mbfe_B \rangle = r, \quad \langle \mbfe_G, \mbfe_B \rangle = s, \quad \langle \mbfe_{F\cup G}, \mbfe_B \rangle=k, \quad \langle \mbfe_{F\cap G}, \mbfe_B \rangle=\card{F \cap G}.
\]
These equalities imply that $F\cap G\subseteq B \subseteq F\cup G$, and that  $\langle \mbfe_{F\setminus G}, \mbfe_B \rangle=  \langle \mbfe_{F}, \mbfe_B \rangle- \langle \mbfe_{F\cap G}, \mbfe_B \rangle=r-\card{F\cap G}$. The same holds symmetrically for $G$. 

Conversely, assume that $F$ and $G$ form a modular pair.
Any choice of  subsets 
$S_F \in \binom{F \setminus G}{r -\gamma}$ 
and $S_G \in \binom{G \setminus F}{s -\gamma}$ 
yields a basis $B = (F \cap G) \cup S_F \cup S_G$ 
such that $\mbfe_B$ satisfies Equation~\eqref{eq:CutsAreEmpty}. 
This yields $Q=\Delta_{k,n} \cap H(F,r) \cap H(G,s)\isom \Delta_{r - \gamma, a} \times \Delta_{s - \gamma, b}$
and also that $Q$ is non-empty.
\end{proof}

\subsection{No vertex is eliminated twice}
    \label{sub:NoVertexIsEliminatedTwice}
Recall that given $\aset{H = \ker (f : \RR^E \to \RR)}_{H \in \calA}$  a hyperplane arrangement in $\RR^E$, 
the sign vector $\sgn p$ of a point $p \in \RR^E$ is a vector in $\aset{-, 0, +}^\calA$ whose $H$-th entry $\sgn_H(p)$ records the sign of $f(p)$.
We denote by $O(\sgn p)$ the region of $\RR^E$ of points $x$ whose sign is $\sgn p$.
See Figure~\ref{fig:CryptoLemma}.

\begin{figure}[htpb]
    \centering
    \includegraphics[width=0.5\linewidth]{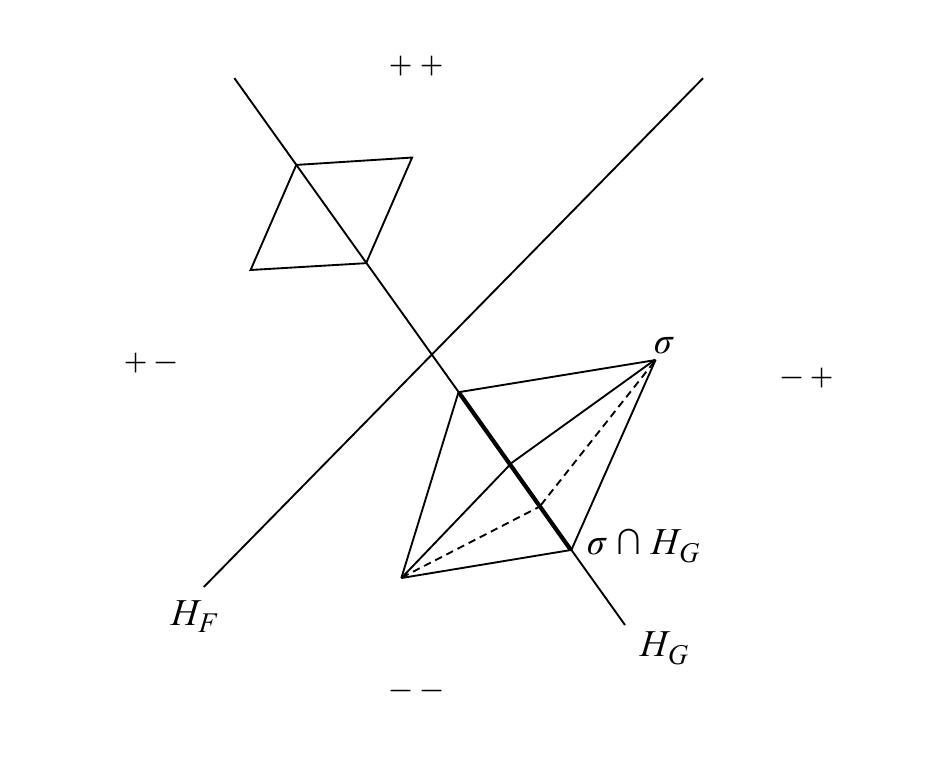}
    \caption{
    Two hyperplanes $H(F, r)$ and $H(F, s)$, defining regions  $O(++)$, $O(-+)$, $O(--)$, and $O(+-)$.
    A face of  $\Delta_{k,n}$ like the bipyramid $\sigma$ is possible, whereas a face like the square is not, because a piece lies in $O(++)$.
    }
    \label{fig:CryptoLemma}
\end{figure}

\begin{corollary}
    \label{cor:PlusPlusIsEmpty}
    Let $M$ be a connected split matroid such that $\properZ(M) = \aset{(F,r), (G,s)}$.
    The hypersimplex $\Delta_{k,n}$ is disjoint from $O(0+)$, $O(+0)$, and $O(++)$.
\end{corollary}

\begin{proof} 
If $Q = \Delta_{k,n} \cap H^+(F,r) \cap H^+(G,s)$ is empty we are done.
Otherwise Lemma~\ref{lemma:TwoCuts} shows that $Q = \Delta_{k,n} \cap H(F,r) \cap H(G,s)$, which implies that $\Delta_{k,n} \cap O(++)$ is empty, as desired. 
\end{proof}

By \cite[Theorem~3.2]{bd08} the condition that $\properZ(M) = \aset{(F,r), (G,s)}$ is a split matroid is equivalent to asking that the pair $(F,r)$ and $(G,s)$ satisfies:
\begin{itemize} 
  \item $r < \min(\card F, k)$, symmetrically for $G$.
  \item both $r$ and $\card{F^*} - (k - r)$ are positive; these are the rank of $F$ and corank of $F^*$. Symmetrically for $G$. 
  \item $r + s \ge k + \card{F \cap G}$.
\end{itemize}
The choices that yield a disconnected matroid are pairs of the form $(F, r)$ and $(F^*, k - r)$.

The previous corollary generalizes to an arbitrary number of cuts.

\begin{corollary}
    \label{cor:NoTwoPluses}
    Let $M$ be a connected split matroid, 
  with  $\calA = \aset{H_F}_{F \in \properZ(M)}$ the arrangement of cuts.
  For every $p \in \Delta_{k,n}$, if $\sgn p$ contains a $+$, then all the other entries are $-$. $\qed$
\end{corollary}
 
\subsection{Triple and double intersections}
    \label{sub:TripleAndDoubleIntersections}
    Fix a connected split matroid $\properZ(M)$ with cyclic flats $(F_i, r_i)$ and $(G,s)$
    and a face $\sigma \in \calT_{H(G,s)}(\Delta_{k,n})$.
    Using the previous results on sign vectors we 
    show that $\tau = \sigma \cap H(G,s)$ can appear in a double intersection,
    but never in a triple one.
    This technical result is the reason why Theorem~\ref{thm:AlmostValuativeInvariantsIntro} only considers modular pairs, equivalently flats $F$ and $G$ such that $H_F \cap H_G \cap \Delta_{k,n}$ is non-empty, 
    and never flats  such that $H_{F_1}  \cap H_{F_2} \cap H_G \cap \Delta_{k,n}$ is non-empty.
    When no confusion arises, we write $\calT_G$ for $\calT_{H(G,s)}$.

\begin{lemma} 
    \label{lm:SignFlipped}
    Let $\calA$ be an arrangement, 
    $\sigma \subset \RR^E$ a polytope,  
    $p$ a point in  $\sigma$,
    and $x$ a point in the relative interior $\relint \sigma$.
    There is a $q \in \relint \sigma$ such that for every $H \in \calA$ we have:
\[
\sgn_H(q) = 
  \begin{cases}
    -\sgn_H(p) \text{ if } \sgn_H(x) = 0,\\
\sgn_H(x) \text{ otherwise.}
  \end{cases}
\]
  \end{lemma}

\begin{proof} 
Extend slightly the segment from $p$ to $x$ to obtain an endpoint $q$.
It satisfies the desired properties because  $\sigma$ is convex and the elements of $\calA$ are hyperplanes. 

\end{proof}

In some applications of Lemma~\ref{lm:SignFlipped} we argue that there is no hyperplane of $\calA$ cutting the segment $[p,x]$ in the interior between $p$ and $x$. 
This extra condition implies that $\sgn x$ and $\sgn p$ agree in all entries except in $\sgn_H$.

\begin{lemma}
    \label{lm:MinusMinusEmpty}

    Let $M$ be a connected split matroid with $\properZ(M) = \aset{(F,r), (G,s)}$
    and choose $\sigma \in \calT_{G}(\Delta_{k,n})$.
    The intersection $\sigma \cap O(--)$ is empty 
    if and only if $\sigma \cap H_G \subset H_F$.
\end{lemma}

\begin{proof} 
Assume that $\sigma \cap O(--)$ is empty. 
Since $\sigma$ is in $\calT_{G}(\Delta_{k,n})$,
we can choose $x$ and $p$ in $\relint \sigma$
such that $\sgn_G(x) = 0$ and $\sgn_G(p) = -$.
By Lemma~\ref{lm:SignFlipped} we get a $p' \in \relint \sigma$ with $\sgn_G(p') = +$, and:
\begin{itemize} 
    \item if $\sgn_F(x) = -$, then $\sgn(p) = --$, contradicting that $\sigma \cap O(--)$ is empty. 
    \item  if $\sgn_F(x) = +$, then $\sgn(p') = ++$, contradicting Corollary~\ref{cor:PlusPlusIsEmpty}.
\end{itemize}
Thus, $\sgn_F(x) = 0 $, which means that $\sigma \cap H_G \subset H_F$ as desired.

Conversely, assume that $\sigma \cap H_G \subset H_F$. 
Choose $x$ in $\relint \sigma \cap H_G$, so $\sgn(x) = 00$.
If there were a $p \in \Delta_{k,n} \cap O(--) $,
Lemma~\ref{lm:SignFlipped} would  yield $p'$ in $O(++)$, contradicting Corollary~\ref{cor:PlusPlusIsEmpty}.
\end{proof} 

\begin{proposition} 
    \label{prop:NoTripleIntersection}
    Let $M$ be a connected split matroid such that $\properZ(M) = \aset{(F_1,r_1), \dots, (F_m, r_m), (G,s)}$.
    For all $\sigma \in \calT_G(\Delta_{k,n})$ the polytope $\tau = \sigma \cap H_G$ is disjoint from triple intersections $H_{F_1} \cap H_{F_2} \cap H_G$.
\end{proposition}

\begin{proof} 
Assume that $\tau = \sigma \cap H_G$ intersects a triple intersection $H_{F_1} \cap H_{F_2} \cap H_G$.
We consider the signature with respect to $F_1$, $F_2$, and $G$.
For all $x \in \sigma \cap H_G \cap H_{F_1} \cap H_{F_2}$ we have $\sgn(x) = 000$.
Since $\sigma$ is in $\calT_G(\Delta_{k,n})$, 
there is a $p \in \sigma$ with $\sgn_G(p) = +$.
Corollary~\ref{cor:NoTwoPluses} implies that $\sgn(p) = --+$.
Since $\sgn(x) = 000$, Lemma~\ref{lm:SignFlipped} yields $p'$ with $\sgn(p') = ++-$, contradicting Corollary~\ref{cor:NoTwoPluses}. 
\end{proof}
 
\begin{lemma}
    \label{lm:FacesInTheError}

    Let $M$ be a connected split matroid such that $\properZ(M) = \aset{(F_1,r_1), \dots, (F_m, r_m), (G,s)}$ 
    and $\sigma$ in~$\calT_{G}(\Delta_{k,n})$.

    There is no $p \in \sigma$ with all negative signature $\sgn(p) = -\dots-$
    if and only if $\sigma \cap H_G \subset H_F$ for exactly one choice of $F = F_i$.
\end{lemma}

\begin{proof} 
For the $(\Rightarrow)$ direction, suppose that there is no $p \in \sigma$ with an all-negative signature. 
Let $x$ be in $\relint \sigma \cap H_G$.
As $\sgn_G(x) =0$, Corollary~\ref{cor:NoTwoPluses} implies that $\sgn_{F_i}(x)$ is either $-$ or $0$ for every~$i \in [m]$;
and Proposition~\ref{prop:NoTripleIntersection} implies that $\sgn_{F_i}(x) = 0$ for at most one $i \in [m]$.
So we have two cases.
In the first case, 
there is a choice of $x \in \relint \sigma \cap H_G$ such that $\sgn_{F_i}(x) = -$ for every $i$, 
and then a straightforward application of Lemma~\ref{lm:SignFlipped} gives a point $p$ with an all-negative signature, contradicting our supposition.
In the second case, for every choice of $x \in \relint \sigma \cap H_G$ we have $\sgn_{F_i}(x) = 0$ for exactly one $i \in [m]$, 
and clearly we are done if we show that $i$ is independent of $x$.
Indeed, suppose that $x, x' \in \sigma \cap H_G$ yield distinct $i, i'$.
As the sign of $x$ is 0 for the $i$-th coordinate, it is $-$ for the $i'$-th one.
Likewise, in a reverted order, for $x'$. 
Note that then the signature of $(x+x')/2$ is $-$ for every $F_i$, yielding again a contradiction.

For the $(\Leftarrow)$ direction, without loss of generality suppose that $\sigma \cap H_G \subset H_{F_m}$.
Choose $x \in \sigma \cap H_G$, so $\sgn(x) = --\dots--00$.
If there were a point $p$ with $\sgn(p) = - \dots ---$, 
then Lemma~\ref{lm:SignFlipped} would give us a point $q$ whose last two entries of $\sgn(q)$ are positive, contradicting Corollary~\ref{cor:NoTwoPluses}.
\end{proof}

\subsection{Cuts do not interfere with each other}
    \label{sub:CutsDoNotInterfere}
The face cut out by $H_G$ is the same for all the family of relaxations.

\begin{lemma} 
    \label{lm:CutsDoNotInterfere}
  Let $M$ be a connected split matroid such that $(G, s) \in \properZ(M)$.
  For any relaxation $N$ of $M$  we have that
  \begin{align*}
      \scrP(M) \cap H_G &= \scrP(N) \cap H_G  
  \end{align*}

Moreover,  $\scrP(M) \cap H_G^+ = \scrP(N) \cap H_G^+$ if $G$ is not relaxed in $N$.
\end{lemma}

\begin{proof} 
Clearly the relaxation of $G$ does not change the intersection with $H_G$ since the discarded points are contained in $(H_G^+)^\circ$. Hence we can assume that $G$ is not relaxed. The result is a direct consequence of Corollary~\ref{cor:PlusPlusIsEmpty}. Namely, if one of the above equalities failed, then in the symmetric difference one would find a point with signature 0+, +0 or ++ for two specific cyclic flats.
\end{proof}



\subsection{\texorpdfstring{From $\calT_G$ to $\calT_G \cap H_G$}{From T\_(G,r) to T\_(G,r) cap H\_G}}
    \label{sub:FromTtoTH}
We now develop some tools to analyze the polytopes contributing to the sum of the correction term from Equation~\eqref{eq:KimsFormula}.
If $\calT$ is a collection of polytopes and $H$ a hyperplane, we denote by $\calT \cap H$ the set of intersections $\aset{\sigma \cap H \suchthat \sigma \in \calT}$.

\begin{lemma} 
    \label{lm:Bijection}
    Let $P$ be a polytope and $H$ a hyperplane.
    The map $\calT_H(P) \to \calT_H(P) \cap H$ 
    given by $\sigma \mapsto \sigma \cap H$
    is a bijection.  
\end{lemma}

\begin{proof} 
 Recall that $P$ is the disjoint union of its open faces and vertices, i.e.~$P = \vertici P \cup \bigcup_{\sigma \prec P} \relint \sigma$. 
     Since $H_G$ contains an interior point of every face $\sigma \in \calT_G(\Delta_{k,n})$, the intersection preserves the facial structure.

\end{proof}

The following result is used in the computation of the \texttt{cd}-index in Lemma~\ref{lm:ModularPairStatistics}. 

\begin{lemma}
    \label{lm:SplittingFace}
  The faces of $\Lambda^F_M$ that split a face of $\Delta_{k,n}$ 
are the faces of $H_F\cap \Delta_{k,n}\isom\Delta_{\rk(F),\card{F}} \times \Delta_{k-\rk(F),n-\card{F}} $ that contain a product of edges, 
i.e.~are not of the form $\Delta_{\rk(F) ,\card{F}} \times \{v\}$ or $\{w\} \times \Delta_{k-\rk(F) ,n-\card{F}}$ for any vertices $v$ of $\Delta_{k-\rk(F),n-\card{F}}$ and $w$ of $\Delta_{\rk(F),\card{F}}$.
\end{lemma}

\begin{proof} 
See the beginning of the proof of Proposition 2.8 of \cite{fs25}.
\end{proof} 

\section{Formulas for almost-valuative invariants}
    \label{sec:FormulasForAlmostValuativeInvariants}

\subsection{The \texttt{cd}-index}
    \label{sub:TheCDIndex}
Now we give our formula for two or more cuts. 
We slightly abuse notation and write $\Psi_{cd}(M)$ for $\Psi_{cd}(\scrP(M))$.
Also, we write $\Lambda^F_M$ and $\Lambda^M_F$ for the matroids 
with base polytopes $\Delta_{k(M),n(M)} \cap H^-(F, \rk(F))$ and $\Delta_{k(M),n(M)} \cap H^+(F, \rk(F))$, respectively.

\begin{theorem}
    \label{thm:CDIndexSplitMatroid}
    Let $M$ be a connected split matroid such that $\properZ(M) = \aset{(F_i,r_i)}_{i \in [m]}$. 
    We have that

    \begin{align} 
\label{eq:CDIndexSplitMatroid}
    \Psi_{cd}(M) = \Psi_{cd}(\Delta_{k,n}) + \sum_{F \in \properZ(M)} (\Psi_{cd}(\Lambda^F_M) - \Psi_{cd}(\Delta_{k,n}) )- \sum_{(F,G) \in \calM \calP(M)} W_{F,G}.
\end{align}
Here $\calM \calP(M)$ is the set of modular pairs $(F_i, F_j)$ of cyclic flats with $i < j$, and
\begin{align}
\label{eq:error}
    W_{F,G}=\sum_{\sigma \in \calS(F,G)} \Psi_{cd}(\sigma \cap H) \cdot d \cdot \Psi_{cd}(\Delta_{s, G} \times \Delta_{k-s, G^*}/(\sigma \cap H)).
\end{align}
Here $\calS(F,G)$ are faces $\sigma$ of $\calT_{G}(\Delta_{k,n})$ such that $\sigma \cap H_G \subset H_F$, and $s$ is the rank of $G$.
\end{theorem}

\begin{proof}
We proceed by induction on $m$.
For $m=0$ and $m=1$ the formula 
yields $\Psi_{cd}(\Delta_{k,n}) = \Psi_{cd}(\Delta_{k,n})$
and $\Psi_{cd}(\Lambda^F_M) = \Psi_{cd}(\Lambda^F_M)$,
respectively, which are trivially true.
We take $m=1$ as base case and assume that the formula is true for some $m$.
For $m + 1$, we write $(G,s)$ instead of $(F_{m+1}, r_{m+1})$.
Relaxing $(G,s) \in \properZ(M)$ yields $\properZ(N) = \aset{(F_1, r_1), \dots, (F_m, r_m)}$, a connected split matroid for which the inductive hypothesis applies.

We first deal with the valuative part.
We cut by $H_G$ and  apply Kim's formula to the following two polytopes:
$Q_1 = \scrP(N)$ and $Q_2 = \Delta_{k,n}$.
Lemma~\ref{lm:CutsDoNotInterfere} implies that $\scrP(N)\cap H_G = \Delta_{k,n} \cap H_G$; 
we denote this by $\tau_G$. 
Recall that $\tau_G$ is isomorphic to $\Delta_{s, G} \times \Delta_{k-s, G^*}$, 
and that $\scrP(N) \cap  H^+(G,s) = \scrP(\Lambda^M_G)$ by Lemma~\ref{lm:CutsDoNotInterfere} and Corollary~\ref{cor:NoTwoPluses}.
In the next few lines, a hat signifies intersection with $H_G$, i.e.~$\hat{\sigma} = \sigma \cap H_G$.
For $Q_1$ we have: 
\begin{align*}
  Q_1^- &= \scrP(N)\cap H^-(G,s) = \scrP(M),\\
  Q_1^+ &= \scrP(N) \cap  H^+(G,s) = \scrP(\Lambda^M_G),\\
  \hat Q_1 &= \scrP(N)\cap H_G = \tau_G, \text{ and }\\
  \Psi_{cd}(M) &= 
\Psi_{cd}(N) - \Psi_{cd}(\Lambda^M_G) + \Psi_{cd}(\tau_G ) \cdot c 
+ \sum_{\sigma \in \calT_G(N)} \Psi_{cd}(\hat{\sigma}) \cdot d \cdot \Psi_{cd}(\tau_G/\hat{\sigma}).
\end{align*}
Similarly, for $Q_2$ we have:
\begin{align*}
  Q_2^- &= \Delta_{k,n} \cap H^-(G,s)  = \scrP(\Lambda^G_M),\\
  Q_2^+ &= \Delta_{k,n} \cap  H^+(G,s)  =\scrP(\Lambda^M_G),\\
  \hat{Q}_2 &= \Delta_{k,n} \cap H_G = \tau_G, \text{ and }\\
  \Psi_{cd}(\Delta_{k,n}) &= \Psi_{cd}(\Lambda^G_M) + \Psi_{cd}(\Lambda_G^M) - \Psi_{cd}(\tau_G) \cdot c - \sum_{\sigma \in \calT_G(\Delta_{k,n})} \Psi_{cd}(\hat{\sigma}) \cdot d \cdot \Psi_{cd}(\tau_G/\hat{\sigma}).
\end{align*}
Adding both expressions and writing 
$\calE(\tau) = \Psi_{cd}(\tau) \cdot d \cdot \Psi_{cd}(\tau_G / \tau)$ for the correction:
\begin{align*}
\Psi_{cd}(M) - \Psi_{cd}(N) - \Psi_{cd}(\Lambda^G_M) + \Psi_{cd}(\Delta_{k,n}) =  \sum_{\sigma' \in \calT_G(N)} \calE(\sigma' \cap H_G) - \sum_{\sigma \in \calT_G(\Delta_{k,n})} \calE(\sigma \cap H_G).
\end{align*}

It remains to deal with the error term, the right-hand-side term above.
Lemma~\ref{lm:Bijection} implies it is equal to
\[
  -\left(\sum_{\tau \in \calT_G(\Delta_{k,n}) \cap H_G} \calE(\tau) - \sum_{\tau' \in \calT_G(N) \cap H_G} \calE(\tau') \right).
\]
We now make two claims. Our first claim is that $\calT_G(N) \cap H_G \subset \calT_G(\Delta_{k,n}) \cap H_G$, 
reducing the error to the sum of $\calE(\tau)$ for 
$\tau \in  (\calT_G(\Delta_{k,n}) \cap H_G) \setminus (\calT_G(N) \cap H_G)$.
The second claim is that the latter set is partitioned by the $F_i$ such that $(F_i, G)$ is a modular pair, that is
\[
\tau \in  (\calT_G(\Delta_{k,n}) \cap H_G) \setminus (\calT_G(N) \cap H_G) =
  \bigsqcup_{F_i} \aset{\sigma \in \calT_G(\Delta_{k,n}) \suchthat \sigma \cap H_G \subset H_{F_i}}.
\]
Those two claims and the induction hypothesis on $\Psi_{cd}(N)$ imply the result.

First claim: let $\tau'$ be in $\calT_G(N) \cap H_G$.
By Lemma~\ref{lm:Bijection} there is a unique $\sigma^N \in \calT_G(N)$ such that $\tau = \sigma^N \cap H_G$.
Recall that this means that $\sigma^N$ is a face of $\Delta_{k,n} \cap \bigcap_{i \in [m] }H^-(F_i, r_i)$.
Since $\sigma^N$ is in $\calT_G(N)$, 
there is $p \in \relint \sigma^N$ such that $\sgn_G(p) = +$.
By Corollary~\ref{cor:NoTwoPluses}, we have $\sgn_{F_i}(p) = -$.
Thus, $p$ is contained in $\bigcap_{i \in [m]} H^-(F_i, r_i)^\circ$, 
and since $p$ is in $\relint \sigma^N$, 
this implies that there is $\sigma \prec \Delta_{k,n}$ such that $\sigma^N = \sigma \cap \bigcap_{i \in [m] }H^-(F_i, r_i)$.
Now, $\sigma$ is in $\calT_G(\Delta_{k,n})$ and the claim follows from Lemma~\ref{lm:CutsDoNotInterfere} which yields
\[
 \sigma \cap H_G = \sigma \cap H_G \cap \bigcap_{i \in [m] }H^-(F_i, r_i) = \sigma^N \cap H_G = \tau. 
\]

Second claim: 
Since $\tau\in \calT_G(\Delta_{k,n})$, there exists a face
$\sigma \prec\Delta_{k,n}$ such that $\sigma\cap H_G=\tau$. By assumption, there exist two points $q^+, q^-\in \sigma$ such that $q^+\in (H_G^+)^\circ \cap \sigma$, likewise for $-$. Moreover, we know that $q^+ \in \sigma^N \coloneqq \sigma \cap \bigcap_{i \in [m] }H^-(F_i, r_i) $, otherwise we would have two $+$ in $\sgn q^+$. This implies that we cannot have a point $p\in \sigma^N$ with all negative signs, because otherwise it would be in the interior of $\sigma^N$ and $\tau$ would split $\sigma^N$, which contradicts the assumption $\tau\notin \calT_G(N) \cap H_G$. By Lemma~\ref{lm:FacesInTheError}, this happens if and only if for exactly one $F_i$ we have that $\sigma \cap H_G \subset H_{F_i}$.
By Lemma~\ref{lemma:TwoCuts}, we get that $(F_i, G)$ is a modular pair, which finishes the claim.

\end{proof}

\begin{ex}
\label{ex:M1M2M3}
    To see the role played by modular pairs,
    consider the following rank-4 matroids:
    \begin{enumerate}
        \item $M_1$ with nonbases $\{\{1,2,3,4\}, \{1,2,5,6\}\}$ on $E = [8]$,
        \item $M_2$ with nonbases $\{\{1,2,3,4\}, \{1,5,6,7\}\}$ on $E = [8]$,
        \item $M_3$  with nonbases $\{\{1,2,3,4\}, \{5,6,7,8\}\}$ on $E = [8]$.
    \end{enumerate}
    Each has two cyclic flats $F, G$, the nonbases, which are also hyperplane-circuits. 
    Thus, we only need to compute the \texttt{cd}-index of the cuspidal matroid $\Lambda_{1,4,4,8}$, 
    the hypersimplex $\Delta_{4,8}$ and the error term in the case of modular pair present in $M_1$.  
    In $M_2$ and  $M_3$ we have $\card{F\cap G}=1$ 
    and $\card{F\cap G}=0$, respectively, so they are non-isomorphic. 
    We have:
\begin{align*}
  \Psi_{cd}(M_1) = &3664cd^3 + 2432dcd^2 + 2748d^2cd + 3428d^3c + 456c^3d^2 + 1550c^2dcd +1834c^2d^2c \\
&\hspace{0.2cm}+ 1768cdc^2d + 3616cdcdc + 2664cd^2c^2 + 408dc^3d + 1300dc^2dc + 1816dcdc^2 \\
&\hspace{0.2cm}+ 1034d^2c^3 + 16c^5d + 110c^4dc + 376c^3dc^2 + 633c^2dc^3 + 460cdc^4 + 66dc^5 + c^7, \\
\Psi_{cd}(M_2) &= \Psi_{cd}(M_3) = \\&3664cd^3 + 2432dcd^2 + 2752d^2cd + 3432d^3c + 456c^3d^2 + 1552c^2dcd  + 1836c^2d^2\\
&\hspace{0.2cm} + 1768cdc^2d + 3616cdcdc + 2664cd^2c^2 + 408dc^3d + 1300dc^2dc + 1816dcdc^2 \\
&\hspace{0.2cm} + 1036d^2c^3 + 16c^5d + 110c^4dc + 376c^3dc^2 + 634c^2dc^3 + 460cdc^4 
+ 66dc^5 + c^7. \qedhere 
\end{align*}
\end{ex}

\subsection{The computational data}
    \label{sub:ComputationalData}
Now we consider a modular pair $F,G$ in a connected split matroid $M$ 
and analyse $W_{F,G}$ computationally
to understand  which statistics of $F,G$ are needed.

\begin{lemma} 
\label{lm:WDependsOnlyOnAlphaBetaAB}
 Let $(F,r), (G,s) \in \properZ(M)$ be a modular pair of a connected split matroid $M$.
The error $W_{F,G}$ depends only on 
$\alpha = r - \card{F \cap G}$, 
$\beta = s - \card{F \cap G}$,
$a = \card{F \setminus G}$, and
$b = \card{G \setminus F }$. 
\end{lemma}

\begin{proof} 
    This follows from the fact that the error term $\calE$ is evaluated on $\sigma \cap H_G$ for $\sigma \in \calT_{G}(\sigma)$ that satisfy $\sigma \cap H_G \subset H_F$; 
    and that by Lemma~\ref{lemma:TwoCuts} this polytope $\sigma \cap H_G = \sigma \cap H_G \cap H_F = Q$ depends only on $(a, b; \alpha, \beta)$.
\end{proof}

\begin{lemma} 
\label{lm:ModularPairStatistics}
 Let $(F,r), (G,s) \in \properZ(M)$ be a modular pair in a connected split matroid $M$, 
 and $(a, b; \alpha, \beta)$ as in Lemma~\ref{lm:WDependsOnlyOnAlphaBetaAB}.
The error $W_{F,G}$ of $\Psi_{cd}$ equals
\begin{align*} 
W_{F,G} =
 \sum_{p=1}^{\alpha}
\sum_{q=1}^{\beta}
\sum_{i=p+1}^{a-\alpha+p}
\sum_{j=q+1}^{b-\beta+q}
\binom{a}{i}\binom{b}{j}\binom{a-i}{\alpha-p}\binom{b-j}{\beta-q}
\Psi_{cd}(\Delta_{p,i}\times\Delta_{q,j})\cdot d
\cdot\Psi_{cd}(\Delta_{1,n-i-j}).
\end{align*}
\end{lemma}

\begin{proof} 
 To get a face in $\calS(F,G)$ from Theorem~\ref{thm:CDIndexSplitMatroid}: 
choose three pairwise disjoint sets $A, B, C$ such that $A \subset F \setminus G$, $B \subset G \setminus F$ and $F\cap G \subset C \subset F\cup G$. 
Write $p = \rk(F) - \card{C \cap F}$, $q = \rk(G) - \card{C \cap G}$ and $D=[n]\setminus (A\cup B \cup C)$. 
This choice corresponds to a face given by
\[
\Delta_{\card{C}, C} \times \Delta_{p, A} \times \Delta_{q, B} \times \Delta_{0, D}.
\]
By Lemma~\ref{lm:SplittingFace} we want to avoid the faces of  $\Delta_{\rk(G), \card{G}} \times \{v\}$ or $\{w\} \times \Delta_{k-\rk(G), n-\card{G}}$ for vertices $v$ of $\Delta_{k-\rk(G), n-\card{G}}$ and $w$ of $\Delta_{\rk(G), \card{G}}$. 
So we need that $\card{A}> p >0$ and $\card{B}> q >0$. 
Once we have this characterization we can evaluate the sum over all the faces of $\calS$ by grouping the faces with the same combinatorial properties. 
Notice that for the sake of evaluating the \texttt{cd}-index the only relevant variables are: $\alpha$, $\beta$, $i\colon=\card{A}$ and $j\colon=\card{B}$. 
If we fix these four values, there are exactly 
$C_{p,q,i,j}=
\binom{\card{F \setminus G}}{i}
\binom{\card{G\setminus F}}{j}
\binom{\card{F \setminus G}-i}{\rk(F)- p -\card{F\cap G}}\binom{\card{G\setminus F}-j}{ \rk(G)- q - \card{F\cap G}}$ possible faces of this combinatorial type. 
Hence, we obtain that 
$\sum_{\sigma \in \calS(F,G)} \Psi_{cd}(\sigma) \cdot d \cdot \Psi_{cd}(\Delta_{s, \card{G}} \times \Delta_{k-s, n-\card{G}}/\sigma)$ can be rewritten as: 
\begin{align*}
\sum_{p=1}^{\rk(F)-\card{F\cap G}}
\sum_{q=1}^{\rk(G)-\card{F\cap G}}
\sum_{i=p+1}^{\card{F}-\rk(F)+p}
\sum_{j=q+1}^{\card{G}-\rk(G)+q}
C_{p,q,i,j}
\Psi_{cd}(\Delta_{p,i}\times\Delta_{q,j})\cdot d
\cdot\Psi_{cd}(\Delta_{1,n-i-j}),
\end{align*}
where we are using that the face-figure of a product of hypersimplices is simplicial for any face of dimension higher than $2$.  
\end{proof}

This shows that the error term is symmetrical under swapping $F$ and~$G$. 
For rewriting the general formula, let $\lambda(r,h)$ be the number of proper non-empty cyclic flats of $M$ of rank $r$ and size $h$,
and $\mu( a,b; \alpha,\beta)$ the number of modular pairs of cyclic flats $F, G$ with prescribed $\alpha$, $\beta$, $a$ and  $b$, as defined in Lemma~\ref{lm:WDependsOnlyOnAlphaBetaAB}.
Moreover, since Lemma~\ref{lm:ModularPairStatistics} shows that $W_{F,G}$ depends only on $\mbfr = (a, b; \alpha, \beta)$, we can write $W(\mbfr)$.

\begin{theorem} 
    \label{thm:RecursiveCD}
    Let $\scrP(M) = \Delta_{k,n} \cap \bigcap_{(F, r) \in \properZ(M)} H^-(F, r)$ be connected split.
    We have that
\begin{align}
\label{eq:recursivecd}
\Psi_{cd}(M) =& \Psi_{cd}(\Delta_{k,n}) + \sum_{0 < r < h < n} \lambda(r,h) \left( \Psi_{cd}(\Lambda_{k-r,k,n-h,n}) - \Psi_{cd}(\Delta_{k,n}) \right) \\
&- \sum_{\mbfr} \mu(\mbfr) \cdot W(\mbfr).
\end{align}
Here the second sum ranges over all $\mbfr = ( a,b;\alpha,\beta)$ such that $0 < \alpha < a < n$, $0 < \beta < b < n$ and $(\alpha,a)\leq (\beta,b)$ with respect to the lex-order.
\end{theorem}

\begin{proof}
    The result follows from Theorem~\ref{thm:CDIndexSplitMatroid},
    the invariance of $\Psi_{cd}$ under matroid isomorphism, 
    and Lemma~\ref{lm:WDependsOnlyOnAlphaBetaAB}.
\end{proof}

So indeed, to compute the \texttt{cd}-index the only information needed is the numbers $\mu( a,b; \alpha,\beta)$
and knowing the result for hypersimplices and cuspidal matroids.
We keep the results of these computations in a database to speed up the running time.
We use the product formula from Equation~\eqref{eq:ProductFormula} inside the correction~$\calE$. 

\begin{remark}
  Kim derived a formula for the \texttt{cd}-index of $\scrP(M) \subset \Delta_{2,n}$
in terms of \texttt{cd}-indices of base polytopes of rank-2 matroids with $1$, $2$ and $3$ parallelism classes \cite[Section~5]{kim10}. 
Furthermore, for the first two cases a formula is provided for the calculations, but the case with $3$ parallelism classes remained open. 
An application of~Theorem~\ref{thm:CDIndexSplitMatroid} gives a recursion for them. 
  In this particular case all the hypersimplices that appear in the formula are simplices, hence to speed up the calculation we use a result from Purtill~\cite{pur93} proving that the \texttt{cd}-index of $\Delta_{1,n}$ is the $n$th André polynomial. 
  In~\cite{fv26} we provide code that recomputes Table~1 of \cite{kim10}.
\end{remark}

\subsection{Almost-valuative invariants and the  \texorpdfstring{$f$-vector}{f-vector}}
  \label{sub:AlmostValuative}
Note that the proof of Theorem~\ref{thm:CDIndexSplitMatroid} does not use any specific properties of the correction~$\calE(\sigma \cap H)$ beyond those encapsulated in the following definition:

\begin{definition} 
    \label{def:AlmostValuative}
Let $\Psi : \operatorname{MatPoly} \to \mathcal{G}$ 
be a function, and $(\mathcal{G}, +, \cdot)$ an arbitrary ring.
It is \emph{almost-valuative} if for some function $\calE : \operatorname{MatPoly} \to \mathcal{G}$ and constant $K \in \mathcal{G}$ we have:
for all matroids $M$ and hyperplanes $H$ splitting $\scrP(M)$ into two matroid polytopes 
\begin{align}
    \label{eq:AlmostValuative}
\Psi(M) =& \Psi(\scrP(M) \cap H^+) + \Psi(\scrP(M) \cap H^-) - \Psi(\scrP(M) \cap H) \cdot K 
                 \\ \nonumber &- \sum_{\sigma \in \calT_H(M)} \calE(\sigma \cap H).
\end{align} 
\end{definition}

We obtain immediately our main theorem, stated as Theorem~\ref{thm:AlmostValuativeInvariantsIntro} in the intro, which we now repeat for the convenience of the reader. 

\begin{theorem}  
    \label{thm:AlmostValuativeInvariants}
    Let $M$ be a connected split matroid such that $\properZ(M) = \aset{(F_i,r_i)}_{i \in [m]}$, 
    and $\Psi$ an almost-valuative invariant with correction~$\calE$.
    We have that
    \[ \Psi(M) = \Psi(\Delta_{k,n}) + \sum_{F \in \properZ(M)} (\Psi(\Lambda^F_M) - \Psi(\Delta_{k,n}) )- \sum_{(F,G) \in \calM \calP(M)} W_{F,G}.\]
Here $\calM \calP(M)$ is the set of modular pairs $(F_i, F_j)$ of cyclic flats with $i < j$, and
\begin{align}
    W_{F,G}=\sum_{\sigma \in \calS(F,G)} \calE(\sigma \cap H).
\end{align}
Here, $\calS(F,G)$ are faces $\sigma$ in $\calT_G(\Delta_{k,n})$ such that $\sigma \cap H_G \subset H_F$.

Moreover, we have that $W_{F,G}$ depends only on $a = \card{F \setminus G}$, $b = \card{G \setminus F}$, $\alpha = \rk(F) - \card{F \cap G} $ and $\beta = \rk(G) - \card{F \cap G}$.
Also
\begin{align*} 
\Psi(M) =& \Psi(\Delta_{k,n}) 
    + \sum_{0 < r < h < n} \lambda(r,h) \left( \Psi(\Lambda^{r,h}_{k,n}) - \Psi(\Delta_{k,n}) \right) \\
&- \sum_{\mbfr} \mu(\mbfr) \cdot W(\mbfr).
\end{align*}
The second sum ranges over all $\mbfr = ( a,b;\alpha,\beta)$ 
such that $0 < \alpha < a < n$, $0 < \beta < b < n$ and $(\alpha,a)\le_{\operatorname{lex}} (\beta,b)$ in the lexicographic order.
\end{theorem}

As an example, we consider the $f$-vector $(f_i)$, where $f_i$ counts the number of $i$-th dimensional faces of a polytope $P$. 
Encoding it as a polynomial in $\ZZ[t]$ we get that it is almost-valuative.

\begin{lemma} 
    \label{lemma:fVectorIsAlmostValuative}
    The following invariant is almost-valuative:
\begin{align} 
    \label{eq:fPolynomial}
    \Psi_f(P) = \sum_{i = 0}^{\dim(P)} f_i t^i \subset \ZZ[t].
\end{align}
In particular, for a hyperplane $H$ we have
\begin{align}
\Psi_{f}(P) =& \Psi_{f}(P^+) + \Psi_{f}(P^-) - \Psi_{f}(P \cap H)  
\\ \nonumber &- \sum_{\sigma \in \calT_{H}(P)} t^{\dim(\sigma \cap H)} \cdot (1+t).
\end{align} 
\end{lemma}

\begin{proof} 
   A face $\sigma$ of $P$ in $\calT_H(P)$ is counted twice by $\Psi_{f}(P^+) + \Psi_{f}(P^-)$, 
   once as $\sigma \cap H^+$ and once as $\sigma \cap H^-$.
   The correction $t^{\dim(\sigma \cap H)} \cdot t$ handles this by cancelling out one occurrence.
   Also, $\sigma$ generates a face not present in $P$ which is counted twice by $\Psi_{f}(P^+) + \Psi_{f}(P^-)$; one occurrence is corrected by one term in $\Psi_{f}(P \cap H)$ and the other by the correction $t^{\dim(\sigma \cap H)} \cdot 1$.
   Finally, a face $\tau$ of $P$ contained in $H$ is counted twice, but one occurrence cancels out with the remaining terms of~$\Psi_{f}(P \cap H)$. 
\end{proof}

\begin{remark} 
    Equation~\ref{eq:fPolynomial} is implicitly proven in \cite[Proposition~2.7]{fs25}.
    This means that Theorem~\ref{thm:AlmostValuativeInvariants} applied to $\Psi_f$ recovers the results of \cite{fs25}.
\end{remark}
 
\section{Applications and future directions}
    \label{sec:ApplicationsAndFutureDirections}

\subsection{Sparse paving matroids}
  \label{sub:SparsePavingMatroids}
Sparse paving matroids are connected split matroids where all proper cyclic flats are circuit hyperplanes, i.e.~rank $k-1$ and size $k$.
By \cite[Lemma 2.8]{fer22} this is equivalent to the more familiar definitions.
Two cyclic flats form a modular pair if and only if the intersection has $k-2$ elements.
This class includes many famous examples like the Fano matroid, $M(K_4)$ and Vamos matroid. 
This simplifies Equation~\eqref{eq:recursivecd} to the following.

\begin{corollary}
Let $M$ be a connected sparse paving matroid of rank $k$ on $n$ elements having
exactly $\lambda$ circuit-hyperplanes, and let $\mu$ count the pairs of circuit-hyperplanes which have $k-2$
elements in common. Then
\[
\Psi_{cd}(M) =\lambda  \Psi_{cd}(\Lambda_{1,k,n-k,n}) - (\lambda-1)\Psi_{cd}( \Delta_{k,n}) -  \mu (c^2d+2d^2) \Psi_{cd}(\Delta_{1,n-4}).
\]
\end{corollary}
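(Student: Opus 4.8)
The plan is to specialize the general formula from Equation~\eqref{eq:recursivecd} to the sparse paving case and evaluate each piece. First I would unwind the definitions: for a connected sparse paving matroid $M$ of rank $k$ on $n$ elements, every proper non-empty cyclic flat is a circuit-hyperplane, i.e.~has rank $r = k-1$ and size $h = k$. Hence $\lambda(r,h)$ is supported on the single pair $(r,h) = (k-1,k)$, where it equals the total count $\lambda$ of circuit-hyperplanes, and the corresponding cuspidal matroid is $\Lambda_{k-r,k,n-h,n} = \Lambda_{1,k,n-k,n}$. Plugging this into the first sum of Equation~\eqref{eq:recursivecd} collapses it to $\lambda\bigl(\Psi_{cd}(\Lambda_{1,k,n-k,n}) - \Psi_{cd}(\Delta_{k,n})\bigr)$, and combining with the leading $\Psi_{cd}(\Delta_{k,n})$ term gives the claimed $\lambda\,\Psi_{cd}(\Lambda_{1,k,n-k,n}) - (\lambda-1)\Psi_{cd}(\Delta_{k,n})$.

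Next I would handle the modular-pair sum. Two circuit-hyperplanes $F, G$ have $\rk F = \rk G = k-1$; since $M$ is connected split, $\rk(F \cup G) = k$, so modularity $\rk F + \rk G = \rk(F\cup G) + \rk(F\cap G)$ forces $\rk(F \cap G) = k-2$. For a circuit-hyperplane every proper subset is independent, so $\rk(F\cap G) = \card{F\cap G}$, giving $\card{F\cap G} = k-2$; this is exactly the stated criterion, and the number of such pairs is $\mu$. In Notation~\ref{not:ModularPair} this means $a = \card{F\setminus G} = 2$, $b = \card{G\setminus F} = 2$, $\alpha = \rk F - \rk(F\cap G) = 1$, $\beta = 1$. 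So only the single term $W(1,1,2,2)$ contributes, with multiplicity $\mu$.

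It remains to evaluate $W(1,1,2,2)$. From Notation~\ref{not:ModularPair} the quadruple sum has $p,q$ ranging in $\{1\}$, $i$ ranging in $\{p+1,\dots,a-\alpha+p\} = \{2\}$, and $j$ ranging in $\{q+1,\dots,b-\beta+q\} = \{2\}$, so again exactly one summand survives: $p=q=1$, $i=j=2$, with binomial coefficient $\binom{2}{2}\binom{2}{2}\binom{0}{0}\binom{0}{0} = 1$. Thus $W(1,1,2,2) = \Psi_{cd}(\Delta_{1,2}\times\Delta_{1,2})\cdot d\cdot\Psi_{cd}(\Delta_{1,n-4})$. Since $\Delta_{1,2}$ is a single point (actually the segment—$\Delta_{1,2}$ is a $1$-simplex, with $\Psi_{cd} = c$), I would compute $\Psi_{cd}(\Delta_{1,2}\times\Delta_{1,2})$ via the product formula Equation~\eqref{eq:ProductFormula}: $N(c,c) = c^2 + 2d$ (this is the standard value for the $cd$-index of a square, a well-known small case). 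Hence $W(1,1,2,2) = (c^2+2d)\,d\,\Psi_{cd}(\Delta_{1,n-4}) = (c^2 d + 2d^2)\Psi_{cd}(\Delta_{1,n-4})$, and multiplying by $-\mu$ yields the last term of the corollary.

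The only genuine arithmetic checkpoints are: (i) confirming that for $\Delta_{1,2}$ (a segment) $\Psi_{cd} = c$ and for $\Delta_{1,2}\times\Delta_{1,2}$ (a square) $\Psi_{cd} = c^2 + 2d$, which follows from the recursion behind Equation~\eqref{eq:ProductFormula}; and (ii) verifying that the index ranges in Notation~\ref{not:ModularPair} indeed each collapse to a single value under $a=b=2,\alpha=\beta=1$. Neither is an obstacle; the main conceptual point—that sparse paving forces $(r,h)=(k-1,k)$ for every cyclic flat and $(\alpha,\beta,a,b)=(1,1,2,2)$ for every modular pair—is immediate from the combinatorics of circuit-hyperplanes together with connectedness of $M$, so the proof is essentially a bookkeeping specialization of the theorem. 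I would write it in exactly that order: reduce $\lambda(r,h)$, reduce the modular data, then evaluate the single surviving $W$.
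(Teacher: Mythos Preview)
Your proposal is correct and follows exactly the paper's approach, which simply states that the corollary is the specialization of Equation~\eqref{eq:recursivecd} to the sparse paving case; you have merely filled in the bookkeeping that the paper leaves implicit. The one small point worth tightening is the converse direction of the modular-pair characterization: you show that modularity forces $\card{F\cap G}=k-2$, but to identify $\mu$ with the count of modular pairs you also need that $\card{F\cap G}=k-2$ implies modularity, which follows immediately since then $\rk(F\cap G)=k-2$ (independent) and $\rk(F\cup G)=k$ (as $G\not\subset F$ and $F$ is a hyperplane).
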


This formula shows that $\lambda$ and $\mu$ are the only two matroidal data needed to encode the flag $f$-vector of a sparse paving matroid.

\begin{ex}
  Let $F$ be the Fano matroid and $V$ be the Vamos matroid, then:
\begin{align*}
  \Psi_{cd}(F) &= 7  \Psi_{cd}(\Lambda_{1,3,4,7}) - 6\Psi_{cd}( \Delta_{3,7}) -  21 (c^2d+2d^2)\cdot \Psi_{cd}(\Delta_{1,3}) \\
  &=  364cdcd + 98cdc^3 + 490cd^2c + 145c^2dc^2 + 221c^2d^2 + 91c^3dc + 19c^4d + c^6 \\
  & \hspace{0.5cm} + 462dcdc + 186dc^2d + 26dc^4 + 298d^2c^2 + 482d^3,\\
  \Psi_{cd}(V) &= 5  \Psi_{cd}(\Lambda_{1,4,4,8}) - 4\Psi_{cd}( \Delta_{4,8}) -  8 (c^2d+2d^2)\cdot \Psi_{cd}(\Delta_{1,4}) \\
  &= 3580cdcdc + 1690cdc^2d + 415cdc^4 + 2670cd^2c^2 + 3700cd^3 + 1596c^2dcd \\
  & \hspace{0.5cm} + 635c^2dc^3 + 1922c^2d^2c + 415c^3dc^2 + 510c^3d^2 + 131c^4dc + 19c^5d + c^7 \\
  & \hspace{0.5cm} + 2020dcdc^2 + 2720dcd^2 + 1438dc^2dc + 432dc^3d + 63dc^5 + 2984d^2cd \\
  & \hspace{0.5cm} + 1098d^2c^3 + 3772d^3c.  \qedhere 
\end{align*}
\end{ex}
\subsection{Code for computation}
  \label{sub:CodeForComputation}
We implement the recursions of Proposition~\ref{prop:PhiForHypersimplex} and Lemma~\ref{lm:PhiForCuspidal} in Python and in SageMath. 
We provide a function in SageMath that takes any matroid as input, checks if the matroid is connected split, 
and if yes then returns its \texttt{cd}-index. 
The code for this can be found in: \\
\url{https://github.com/AV-2/cd-index-matroid-base-polytope} 
 
\subsection{Formulas for other polytopes}
We generalize to the \texttt{cd}-index setting questions posed by  Ferroni and Schr\"oter in \cite[Section~3]{fs25}.
The independence polytope $\scrP_{\text{ind}}(\mathrm{M})$, 
is defined as the convex hull of the indicator vectors of all independent sets of $M$. 
Note that $\mathcal{P}(\mathrm{M})$ appears as a facet of $\mathcal{P}_{\text{ind}}(\mathrm{M})$,
so a natural challenge is to extend our enumerative results to this larger class of polytopes.
\begin{problem}
  Find a formula for the \texttt{cd}-index of the independence polytope for (connected) split matroids.
\end{problem}
We believe that a first step would be to establish whether the cuts of $\scrP(M)$ can be extended to define $\scrP_{ind}(M)$.
If successful, then the next step is studying whether the geometry of Section~\ref{sec:TheGeometryOfSplitConnectedMatroids} still holds.

On the other hand, it is natural to ask whether these formulas can be generalized to arbitrary matroids. 
We pose the following question, which mirrors similar inquiries into the $f$-vectors of arbitrary matroid polytopes.

\begin{question}
  Can the approach of splitting polytopes be used to obtain a computable formula for the \texttt{cd}-index of an arbitrary matroid polytope? Independently of the details of the computation, what is the precise matroidal data needed to recover the \texttt{cd}-index?
\end{question}

All matroid polytopes are cut out of the hypersimplex (or the unit cube) by compatible split hyperplanes. Since the behavior of the \texttt{cd}-index under hyperplane splits is well-understood (see e.g., \cite{kim10}), it seems reasonable to attempt to generalize these techniques to other classes of $0/1$-polytopes.

\begin{problem}
  Describe the \texttt{cd}-index of $0/1$-polytopes in terms of their supporting split hyperplanes.
\end{problem}

\subsection{Inequalities and structural properties}
A major theme in the study of the \texttt{cd}-index is the non-negativity of its coefficients \cite{pur93}. 
 For specific classes of polytopes, stronger inequalities may hold.

\begin{question}
  Do the coefficients of the \texttt{cd}-index of matroid base polytopes satisfy any specific inequalities or structural properties beyond general non-negativity?
\end{question}

As we have seen, some enumerative invariants of split matroids, specifically the $f$-vector and the \texttt{cd}-index of the base polytope, are fully determined by the parameter sets $\{\lambda_{r,h}\}$ and $\{\mu(a,b;\alpha,\beta)\}$. 
Since the former encodes the flag $f$-vector, it is natural to ask whether the enumerative information contained in the flag $f$-vector is strictly richer than that of the ordinary $f$-vector for this class of matroids. More precisely, we ask the following natural question that arises from our work.

\begin{problem}
  Is the $f$-vector of a connected split matroid sufficient to determine its flag $f$-vector?
\end{problem}

We have verified this affirmatively via computer experiments for all connected split matroids on a ground set of size at most $12$, as well as for all rank $2$ connected split matroids on a ground set of size at most $50$. 

For sparse paving connected matroids, we can also answer this in the affirmative in general. By examining the first few entries of the $f$-vector, specifically the number of vertices $f_0$ and $2$-faces $f_2$, we are able to recover the defining parameters $\lambda$ and $\mu$. Consequently, since $\lambda$ and $\mu$ determine the \texttt{cd}-index, the ordinary $f$-vector already encodes the full flag enumerative information in this setting. 

However, one must be careful not to overgeneralize this property. It might intuitively seem possible to uniquely recover the underlying matroidal data (the $\lambda$'s and $\mu$'s) directly from the $f$-vector for \emph{all} split matroids, just as we did for sparse paving matroids. This is actually false. Consider two split matroids of rank $k=4$ on $n=8$ elements defined by the following proper cyclic flats:
\begin{itemize}
  \item $\mathbf{M_1}$: $F_1 = \{1,2,3,4,5\}$ (rank $3$) and $F_2 = \{5,6,7,8\}$ (rank $3$).
  \item $\mathbf{M_2}$: $F_1 = \{1,2,3\}$ (rank $2$) and $F_2 = \{4,5,6,7\}$ (rank $3$).
\end{itemize}

In both cases, the pairs of cyclic flats fail the modularity property, meaning their respective $\mu$ parameters coincide. Their $\lambda$ parameters clearly differ due to the varying ranks and sizes of the flats. Yet, despite possessing different $\lambda$ and $\mu$ data, these two matroids share the exact same $f$-vector and \texttt{cd}-index. Therefore, while the $f$-vector might determine the flag $f$-vector, it cannot generally be used to reconstruct these data of the matroid.

\section{Acknowledgements}

We thank Luis Ferroni for suggesting the problem, and for his encouragement and feedback during the realization of this project. 
We thank Damiano Testa for discussions, proofreading, and suggestions for improvements.
We thank Benjamin Schröter and two anonymous referees who reviewed an extended abstract for pointing out to further literature and open problems.
This project started during the ``VII winter school Geometry, Algebra and Combinatorics of Moduli Spaces and Configurations'', in Dobbiaco.
We thank the organizers for the wonderful event.
TF was supported by the Warwick Mathematics Institute Centre for Doctoral Training, the Heilbronn Institute for Mathematical Research (HIMR), and the UK Engineering and Physical Sciences Research Council (EPSRC) grant number EP/V521917/1.
AV was supported by EPSRC grant number EP/X02752X/1.

\printbibliography

\end{document}